\documentclass[10pt,a4paper]{article}
\setlength{\topmargin}{0.0in}
\setlength{\oddsidemargin}{0.33in}
\setlength{\textheight}{9.0in}
\setlength{\textwidth}{6.0in}

\newcommand{\vect}[1]{\boldsymbol{#1}}
\newcommand{\vectt}[1]{\boldsymbol{\mathbf{#1}}}

\newcommand{\dx}{\mathrm{d}x}

\newcommand{\supp}{\mathrm{supp}}

\newcommand{\eP}{\tilde{P}}

\newcommand{\E}{\mathrm{e}}

\def\Xint#1{\mathchoice
{\XXint\displaystyle\textstyle{#1}}%
{\XXint\textstyle\scriptstyle{#1}}%
{\XXint\scriptstyle\scriptscriptstyle{#1}}%
{\XXint\scriptscriptstyle\scriptscriptstyle{#1}}%
\!\int}
\def\XXint#1#2#3{{\setbox0=\hbox{$#1{#2#3}{\int}$}
\vcenter{\hbox{$#2#3$}}\kern-.5\wd0}}

\def\dashint{\Xint-}

\baselineskip=20pt plus1pt

\usepackage{color}
\definecolor{deepblue}{rgb}{0,0,0.5}
\definecolor{deepred}{rgb}{0.6,0,0}
\definecolor{deepgreen}{rgb}{0,0.5,0}

\usepackage{amsmath,amssymb}
\usepackage{bm}
\usepackage{amsthm}
\usepackage{anyfontsize}
\usepackage{url}
\usepackage{graphicx}
\usepackage{array}
\usepackage{varwidth}
\usepackage{geometry}
\usepackage{ esint }
\definecolor{ao(english)}{rgb}{0.0, 0.5, 0.0}
\usepackage{listings}
\usepackage{fancyhdr}
\usepackage{enumitem}
\usepackage[
    backend=bibtex,
   bibstyle=ieee,
    citestyle=numeric-comp,
    natbib=true,
    sortlocale=en_GB,
    sorting = nyt,
    url=true,
    doi=true,
    arxiv=true,
    eprint=true
]{biblatex}
\DeclareFieldFormat{eprint:arXiv}{%
  arXiv preprint: \href{https://arxiv.org/abs/#1}{#1 \texttt{[math.NA]}}%
}
\usepackage{caption}	
\usepackage{mathtools}
\usepackage{subfig}
\usepackage{appendix}
\usepackage{tikz-cd} 
\usepackage{hyperref}
\usepackage{xcolor}
\hypersetup{
    colorlinks,
    linkcolor={blue!50!black},
    citecolor={blue!50!black},
    urlcolor={blue!80!black}
}

\RequirePackage{algorithm}[0.1]

\RequirePackage[capitalize,nameinlink]{cleveref}[0.19]
\crefformat{equation}{\textup{#2(#1)#3}}
\crefrangeformat{equation}{\textup{#3(#1)#4--#5(#2)#6}}
\crefmultiformat{equation}{\textup{#2(#1)#3}}{ and \textup{#2(#1)#3}}
{, \textup{#2(#1)#3}}{, and \textup{#2(#1)#3}}
\crefrangemultiformat{equation}{\textup{#3(#1)#4--#5(#2)#6}}%
{ and \textup{#3(#1)#4--#5(#2)#6}}{, \textup{#3(#1)#4--#5(#2)#6}}{, and \textup{#3(#1)#4--#5(#2)#6}}

% But spell it out at the beginning of a sentence.
\Crefformat{equation}{#2Equation~\textup{(#1)}#3}
\Crefrangeformat{equation}{Equations~\textup{#3(#1)#4--#5(#2)#6}}
\Crefmultiformat{equation}{Equations~\textup{#2(#1)#3}}{ and \textup{#2(#1)#3}}
{, \textup{#2(#1)#3}}{, and \textup{#2(#1)#3}}
\Crefrangemultiformat{equation}{Equations~\textup{#3(#1)#4--#5(#2)#6}}%
{ and \textup{#3(#1)#4--#5(#2)#6}}{, \textup{#3(#1)#4--#5(#2)#6}}{, and \textup{#3(#1)#4--#5(#2)#6}}

\newmuskip\pFqmuskip

\newcommand*\pFq[6][8]{%
  \begingroup % only local assignments
  \pFqmuskip=#1mu\relax
  \mathchardef\normalcomma=\mathcode`,
  % make the comma math active
  \mathcode`\,=\string"8000
  % and define it to be \pFqcomma
  \begingroup\lccode`\~=`\,
  \lowercase{\endgroup\let~}\pFqcomma
  % typeset the formula
  {}_{#2}F_{#3}{\left(\genfrac..{0pt}{}{#4}{#5};#6\right)}%
  \endgroup
}
\newcommand{\pFqcomma}{{\normalcomma}\mskip\pFqmuskip}

\usepackage{algpseudocode}
\algnewcommand{\Input}[1]{
  \State \textbf{Input:}
  \Statex \hspace*{\algorithmicindent}\parbox[t]{.8\linewidth}{\raggedright #1}
}
\algnewcommand{\Output}[1]{
  \State \textbf{Output:}
  \Statex \hspace*{\algorithmicindent}\parbox[t]{.8\linewidth}{\raggedright #1}
}
\algnewcommand{\Indent}[2]{
  \State {#1}
  \vspace{-2mm}
  \Statex \hspace*{\algorithmicindent}\parbox[t]{.9\linewidth}{\raggedright #2}
}
\renewcommand{\Comment}[2][.5\linewidth]{%
  \leavevmode\hfill\makebox[#1][l]{$\triangleright$~#2}}

\addbibresource{references.bib}

\usepackage{todonotes}

\newtheorem{proposition}{Proposition}[section]
\newtheorem{theorem}{Theorem}[section]
\newtheorem{lemma}{Lemma}[section]
\newtheorem{definition}{Definition}[section]
\newtheorem{corollary}{Corollary}[section]
\newtheorem{remark}{Remark}[section]
\setcounter{tocdepth}{2}%
\numberwithin{equation}{section}

\usepackage{todonotes}

\title{A frame approach for equations  involving \\ the fractional Laplacian}

\author{
Ioannis P.~A.~Papadopoulos\thanks{Weierstrass Institute for Applied Analysis and Stochastics, DE,  \tt{papadopoulos@wias-berlin.de};} \and 
Timon S.~Gutleb \thanks{School of Computer Science, University of Leeds, UK, {\tt t.s.gutleb@leeds.ac.uk;}} \and
Jos\'e A.~Carrillo \thanks{Mathematical Institute, University of Oxford, UK, {\tt carrillo@maths.ox.ac.uk};} \and
Sheehan Olver \thanks{Department of Mathematics, Imperial College London, UK, {\tt{s.olver@imperial.ac.uk}}.}}

\begin{document}

\maketitle
\date
\thispagestyle{empty}
\pagestyle{fancy}
\lhead{}

\begin{abstract}
Exceptionally elegant formulae  exist for the fractional Laplacian operator applied to weighted classical orthogonal polynomials. We utilize these results to construct  a solver,  based on frame properties, for equations involving the fractional Laplacian of any power, $s \in  (0,1)$, on an unbounded domain in one or two dimensions. The numerical method  represents solutions in  an expansion of weighted classical orthogonal polynomials as well as their unweighted counterparts with a specific extension to $\mathbb{R}^d$, $d \in \{1,2\}$.    We examine the frame properties of this family of functions for the solution expansion and, under standard frame conditions, derive an a priori estimate for the stationary equation. Moreover, we prove  one achieves the expected order of convergence when considering an implicit Euler discretization in time for the fractional heat equation. We apply our solver to numerous examples including the fractional heat equation (utilizing up to a $6^\text{th}$-order Runge--Kutta time discretization), a fractional heat equation with a time-dependent exponent $s(t)$, and a two-dimensional problem, observing spectral convergence in the spatial dimension for sufficiently smooth data.
\end{abstract}

\section{Introduction}
\label{sec:introduction}

In this work, we develop a spectral method to solve equations of the form
\begin{align}
\mathcal{L}[u] \coloneqq  \lambda_0 \mathcal{I}  + \sum_{j =1}^{ J } \lambda_{ j } (-\Delta)^{s_{ j }}[u] = f,
\label{eq:fpde}
\end{align}
where $\lambda_{ j } \in \mathbb{R}$ are known constants and $s_{ j } \in  (0,1)$.  $(-\Delta)^{s}$ denotes the fractional Laplacian, as defined in the next section, and $\mathcal{I}$ is defined to be the identity operator. The domain is the whole real space $\mathbb{R}^d$, $d \in \{1,2\}$. For uniqueness, we seek a solution that decays as $|x| \to \infty$.

Many equations may reduce to \cref{eq:fpde} after nondimensionalization, linearization, or partial (time) discretization \cite{Antil2017, Du2019, Hatano1998, Benson2000, Carmichael2015}. Solvers for \cref{eq:fpde} are an intermediate step for more complicated problems such as  Newton linearizations of nonlocal Burgers-type equations \cite{Baker1996, Cordoba2006}, power law absorption \cite{Treeby2010}, quasi-geostrophic equations \cite{Chae2005}, fractional Keller-Segel type models \cite{E06,LR09,BC10,LS19}, or fractional porous medium flow \cite{Caffarelli2016}. Finite element methods are popular for approximating the solutions of these equations \cite{Bonito2018, LPG2020}. However, since the operators are nonlocal and the solutions tend to be heavy-tailed (for instance decay at the rate of a Cauchy distribution when $s=1/2$) \cite[Sec.~2]{Vazquez2018}, then any truncation of $\mathbb{R}^d$ may result in artificial numerical artefacts \cite[Sec.~9]{D2020}. Spectral methods that construct bases with global support can be found in the literature \cite{Chen2018, Tang2018, Tang2020, Mao2017, Sheng2020, Li2021, Cayama2020, Papadopoulos2022e}. These alleviate the need to truncate $\mathbb{R}^d$, $d \in \{1,2,3\}$, and some observe spectral convergence for sufficiently smooth data \cite{D2020}. 

Our goal is to construct a flexible spectral method for \cref{eq:fpde} posed on $\mathbb{R}^d$, $d \in \{1,2\}$, via a frame approach \cite{Adcock2019, Adcock2020}.  Our algorithm is summarized in \cref{alg:frame}.

\begin{algorithm}[ht]
\caption{Frame solver for \cref{eq:fpde}.}
\label{alg:frame}
\begin{algorithmic}[1]

\Input{
$\mathcal{L}, f$. \Comment{Operator and right-hand side in \cref{eq:fpde}.}\\
$\Phi_N=\{\phi_1,\dots,\phi_N\}$. \Comment{Set of functions for approximating $u$.}\\
$\{\ell_1, \dots, \ell_M\}$. \Comment{Set of bounded linear functionals.}\\
$\epsilon$. \Comment{SVD tolerance.}
\vspace{2mm}
}
\Output{
${\mathbf u} \in \mathbb{R}^N$. \Comment{$u \approx \sum_{i=1}^N {\mathbf u}_i \phi_i$.}
}
\State{Assemble the matrix $X_{ij} = \ell_j(\mathcal{L}\phi_i)$, $i=1,\dots,N$, $j=1,\dots,M$.}
\State{Assemble the vector ${\mathbf y}_j = \ell_j(f)$, $j=1,\dots,M$.}
\State{Via an $\epsilon$-truncated SVD projection, compute ${\mathbf f} \approx \mathrm{argmin}_{{\mathbf v}\in \mathbb{C}^N} \|X {\mathbf v} - {\mathbf y}\|_{\ell^2}$.}
\State{${\mathbf u} \gets {\mathbf f}$.}
\end{algorithmic}
\end{algorithm}
The performance of \cref{alg:frame} for solving \cref{eq:fpde} is heavily dependent on the choice of bounded linear functionals $\ell_j$ and the family of functions $\Phi_N$ with which we approximate $u$.  In the numerical examples in \cref{sec:examples}, we focus on right-hand sides that are continuous. Hence, we choose a number of collocation points $x_j$, $j \in \{1,\dots, M\}$ and assign $\ell_j (f) = f(x_j)$.

\begin{remark}[General right-hand sides]
For a more general right-hand side $f \in H^*$ that cannot be evaluated pointwise, one may instead consider an orthonormal family of functions on $\mathbb{R}$, $\{\zeta_j \} \subset H$, and choose for $j=1,2,\dots$,
\begin{align}
{\mathbf y}_j = \ell_j(f) = \langle f, \zeta_{j} \rangle_{H^*, H}.
\label{eq:f1}
\end{align}
For instance $\zeta_j(x) = H_{n-1}(x)$ where $\{H_n\}$ are the orthonormalized Hermite polynomials \cite[Sec.~18.3]{OlverNIST}. Similarly one must then construct the matrix (line 3 in \cref{alg:frame})
\begin{align}
X_{ij} = \ell_j(\mathcal{L} \phi_i) = \langle \mathcal{L}\phi_i, \zeta_{j}\rangle_{H^*, H}.
\label{eq:f2}
\end{align}
Theoretically \labelcref{eq:f1} and \labelcref{eq:f2} will fit the criteria of \cref{th:apriori} \cite[Sec.~5.1]{Adcock2020}. Practically, however, the entries may be difficult to evaluate. 
\end{remark}

A crucial ingredient  for the construction of $\Phi_N$  are recent results concerning the application of $(-\Delta)^s$ to weighted and extended Jacobi, $P^{(s,s)}_n(x)$, and Zernike polynomials, $Z^{(b)}_{n,m,j}(x,y)$ \cite{Gutleb2023}. By fixing $\Phi_N$, we induce the family of functions for the right-hand side expansion $\mathcal{L}\Phi_N = (\mathcal{L}\phi_1, \dots, \mathcal{L}\phi_N)$. Thus the problem reduces from solving a fractional differential equation to expanding the right-hand side $f$ in the induced family of functions. By noticing this space has the qualities of a frame, this motivates obtaining the expansion by solving a least-squares problem via an $\epsilon$-truncated SVD projection  as done in line 5 of \cref{alg:frame}.  More precisely, in \cref{prop:frame} we show that the family of functions consisting of extended and weighted Jacobi polynomials is a frame on the Hilbert space $H^s_w(\mathbb{R})$,  $s \in (0,1)$,  defined in \cref{def:Hsw}. Then, by utilizing \cref{lem:range}, in \cref{cor:rhs-frame} we deduce that the operator $(\mathcal{I}+(-\Delta)^s)$,  $s \in (0,1)$,  induces a family of functions for the expansion of the right-hand side that is a frame on the dual space of $H^s_w(\mathbb{R})$.  This leads to the a priori estimate in \cref{th:apriori}.  Although we do not show that the families of functions are frames on $L^2(\mathbb{R})$, we observe that if the correct family of functions is chosen, and a sufficient number of collocation points are sampled, then the coefficients of the right-hand side expansion are well-behaved. Moreover, the coefficients of the expansion for the solution $u$ are immediately deduced and the convergence is spectral for smooth right-hand sides.

\begin{remark}[Scope of the results]
Our frame results only hold for the one-dimensional case for a specific family of functions. However, experimentally, our solver is effective for more general families of functions as well as higher dimensions. Hence, although our analysis is restricted to the case specified in the statement of  \cref{prop:frame}, we apply our solver to a multitude of different examples in \cref{sec:examples}.
\end{remark}

If the number of bounded linear functionals and the number of functions in the frame are the same then, mathematically, the solver is very similar to a collocation method e.g.~\cite{Tang2018, Tang2020}. The main differences are the flexible choices for the solution family of functions, the smaller overhead for the setup of the method, and the construction as a frame approach. A critique is that the truncated SVD of an $m \times n$ matrix, $m > n$, scales at $\mathcal{O}(mn^2)$. For the purposes of this work, the factorization time is negligible and does not influence our results. However, the focus of future work will be to achieve close to $\mathcal{O}(n\log^2 n)$ operations via the $AZ$-algorithm \cite{Coppe2020} and randomized linear algebra solvers for least-squares \cite{Halko2011, Meier2022, Meier2023}.

In \cref{sec:setup} we rigorously state our problem and provide the relevant mathematical background. Then:
\begin{enumerate}
\itemsep=0pt
\item In \cref{sec:spectral-method} we explain why an expansion of the right-hand side $f$, as conducted in line 5 of \cref{alg:frame}, provides an approximation for the solution.
\item We introduce the weighted Jacobi and Zernike polynomials in \cref{sec:ops} and define the functions that are found in the induced family of functions for the right-hand side expansion.
\item In \cref{sec:frames} we prove a number of results that contextualizes our method as a frame approach. These results motivate the use of an $\epsilon$-truncated SVD projection to expand $f$. We end with an a priori estimate in \cref{th:apriori}.
\item With these tools in \cref{sec:time-dependent} we provide the algorithm for the time discretization via Runge--Kutta methods, which is subtly different to a standard approach. We also prove that, in the case of a backward Euler discretization, we obtain the expected convergence rates in time and space.
\item We motivate our spectral method in several examples in \cref{sec:examples} including equations with multiple fractional Laplacians with different exponents, a variable exponent, and a two-dimensional example.
\end{enumerate}

\section{Mathematical setup}
\label{sec:setup}

Let $W^{s,p}(\mathbb{R}^d)$, $d \in \{1,2\}$, $s>0$, denote the (possibly fractional) Sobolev space \cite{Adams2003,Di2012} and $H^{s}(\mathbb{R}^d) \coloneqq W^{s,2}(\mathbb{R}^d)$. We denote the Lebesgue space by  $L^p(\mathbb{R}^d)$, $p \in [1,\infty]$.  We seek a solution $u \in H^{s}(\mathbb{R}^d)$ for \cref{eq:fpde}. Let $H^{-s}(\mathbb{R}^d) \coloneqq H^s(\mathbb{R}^d)^*$ denote the dual space of $H^s(\mathbb{R}^d)$. Then we require $f \in H^{-s}(\mathbb{R}^d)$. Given a Banach space $V$ equipped with the norm $\| \cdot \|_V$, we define the dual norm of the dual space $V^*$ as follows, for any $u^* \in V^*$,
\begin{align}
\| u^* \|_{V^*} \coloneqq \sup_{u \in V, \; u \neq 0} \frac{|\langle u^*, u \rangle_{V^*, V}|}{\| u \|_V}.
\end{align}
It follows that for any $u^* \in V^*$ and $v \in V$, $\langle u^*, v \rangle_{V^*, V} \leq \| u^*\|_{V^*} \| v \|_V$. We denote the space of bounded linear operators between the Banach spaces $V$ and $W$ by $\mathcal{B}(V, W)$ and equip this space with the operator norm, for any $\mathcal{A} \in \mathcal{B}(V,W)$,
\begin{align}
\| \mathcal{A} \|_{\mathcal{B}(V,W)} \coloneqq \sup_{u \in V, \; u \neq 0} \frac{\| \mathcal{A}u \|_W}{\| u \|_V}.
\end{align}
Similarly, we have that for any $\mathcal{A} \in  \mathcal{B}(V,W)$ and $v \in V$, $\| \mathcal{A}u \|_W \leq \| \mathcal{A} \|_{\mathcal{B}(V,W)} \|v\|_V$.

In this work, $\mathcal{I}$ denotes the identity operator and $(-\Delta)^{s}$ denotes the fractional Laplacian. The fractional Laplacian has multiple definitions, many of them equivalent on unbounded domains when considering sufficiently regular functions \cite{Kwasnicki2017} (and not equivalent in other contexts).  Let $\mathcal{S}(\mathbb{R}^d)$ denote the space of Schwartz functions on $\mathbb{R}^d$, $d \in \{1,2\}$. For any $s \in (0,1)$, we define $(-\Delta)^{s} : \mathcal{S}(\mathbb{R}^d) \to L^2(\mathbb{R}^d)$ as \cite[Sec.~3]{Di2012}:
\begin{align}
(-\Delta)^{s} u(\vect{x}) &\coloneqq c_{d,s} \,\,  \dashint_{\mathbb{R}^d} \frac{u(\vect{x}) - u(\vect{y})}{|\vect{x}-\vect{y}|^{d+2s}} \, \mathrm{d}\vect{y}, \; \text{for a.e.} \; \vect{x} \in \mathbb{R}^d, \;\;
c_{d,s} \coloneqq \frac{4^{s} \Gamma(d/2+s)}{\pi^{d/2} |\Gamma(-s)|}. \label{def:fracLap}
\end{align} 
Here $\dashint_{\mathbb{R}^d} \cdot$ denotes the Cauchy principal value integral \cite[Ch.~2.4]{King2009a} and $\Gamma(\cdot)$ denotes the Gamma function. The normalization factor $c_{d,s}$ ensures that as $s\to 0_+$ and $s \to 1_-$, then $(-\Delta)^s u$ converges to $u$ and $-\Delta u$, for any $u\in C^\infty_0(\mathbb{R}^d)$, respectively \cite[Prop.~4.4]{Di2012}. For any $u \in H^{s}(\mathbb{R}^d)$, it may be shown that $(-\Delta)^{s/2}u \in L^2(\mathbb{R}^d)$ \cite[Prop.~3.6]{Di2012}. The action of the fractional Laplacian  on any $u \in H^s(\mathbb{R}^d)$  may be recast in the weak form, $(-\Delta)^{s}: H^{s}(\mathbb{R}^d) \to H^{-s}(\mathbb{R}^d)$:
\begin{align}
\langle (-\Delta)^{s} u, v \rangle_{H^{-s}(\mathbb{R}^d),H^{s}(\mathbb{R}^d)} &= \langle (-\Delta)^{s/2} u,(-\Delta)^{s/2}  v \rangle_{L^2(\mathbb{R}^d)} \;\; \text{for all} \; v \in H^{s}(\mathbb{R}^d). \label{eq:weak}
\end{align}
 Fix $J=1$ in \cref{eq:fpde} and let $\lambda_0 = \lambda$, $\lambda_1 = 1$, and $s_1 = s$ for some $\lambda > 0$ and $s \in (0,1)$.  Consider the weak form of \cref{eq:fpde}, for a given $f \in H^{-s}(\mathbb{R}^d)$ and for all $v \in H^{s}(\mathbb{R}^d)$,
\begin{align}
 \langle (-\Delta)^{s/2} u,(-\Delta)^{s/2}  v \rangle_{L^2(\mathbb{R}^d)} + \lambda \langle u, v \rangle_{L^2(\mathbb{R}^d)} = \langle f, v \rangle _{H^{-s}(\mathbb{R}^d),H^{s}(\mathbb{R}^d)}. \label{eq:weak-form}
\end{align}
The existence of a unique solution $u_* \in H^{s}(\mathbb{R}^d)$ of \cref{eq:weak-form} may be shown via the Lax--Milgram theorem, e.g.~\cite[Sec.~2.2]{Mao2017} and \cite[Thm.~2.2]{Papadopoulos2022e}. By definition of $H^{s}(\mathbb{R}^d)$, we have that $u_* \in L^2(\mathbb{R}^d)$. Moreover, if $f \in L^2(\mathbb{R}^d)$ (and thus $(-\Delta)^{s} u_* \in L^2(\mathbb{R}^d)$) then $u_*$ also solves \cite[Th.~1.1]{Kwasnicki2017}
\begin{align}
(\lambda \mathcal{I} + (-\Delta)^{s}) u_* = f \; \text{a.e.~in} \; \mathbb{R}^d. \label{eq:strong}
\end{align}

\section{The spectral method}
\label{sec:spectral-method}
In this section  we motivate \cref{alg:frame} as an approach for solving \cref{eq:fpde}. In essence we are diagonalizing the operator $\mathcal{L}$.

\begin{definition}[Quasimatrix]
Throughout this work, we use boldface capital letters to denote a quasimatrix associated with a set of functions. A quasimatrix is a matrix whose ``columns'' are functions defined on $\mathbb{R}^d$ \cite[Lec.~5]{Stewart1998}. For instance if $\Phi = \{\phi_i\}_{i=1}^\infty$ then
\begin{align}
{\bm \Phi}(x) &\coloneqq 
\left( 
\phi_1(x) \ \  \phi_2(x) \ \ \cdots
\right).
\end{align}
\end{definition}

\begin{proposition}
\label{prop:apriori2}
Consider the equation \cref{eq:fpde}, let $H$ be a Hilbert space, and fix a right-hand side $f \in H^*$. Suppose that $\mathcal{L}^{-1} \in \mathcal{B}(H^*, H)$. Consider a  finite set of functions  $\Phi_N = \{\phi_1, \dots, \phi_N\}$ and define $\Psi_N = \mathcal{L} \Phi_N = \{\mathcal{L}\phi_1, \dots, \mathcal{L}\phi_N\}$. Consider any expansion coefficient ${\mathbf f}$ for $f$ in the family of functions $\Psi_N$. Then
\begin{align}
\| u - {\bm \Phi}_N {\mathbf f} \|_H \leq \| \mathcal{L}^{-1}\|_{\mathcal{B}(H^*, H)} \| f - {\bm \Psi}_N {\mathbf f} \|_{H^*}.
\end{align}
\end{proposition}
\begin{proof}
\begin{align}
\begin{split}
\| u - {\bm \Phi}_N {\mathbf f} \|_H 
&= \| \mathcal{L}^{-1} \mathcal{L}u - \mathcal{L}^{-1} \mathcal{L} {\bm \Phi}_N {\mathbf f} \|_H \\
&= \| \mathcal{L}^{-1} f- \mathcal{L}^{-1} {\bm \Psi}_N {\mathbf f} \|_H
\leq \| \mathcal{L}^{-1}\|_{\mathcal{B}(H^*, H)} \| f - {\bm \Psi}_N {\mathbf f} \|_{H^*}.
\end{split}
\end{align}
\end{proof}
\cref{alg:frame} is approximately finding an expansion for $f$  via an $\epsilon$-truncated SVD projection and thus \cref{prop:apriori2} implies that by setting ${\mathbf u} = {\mathbf f}$, then $\| u - {\bm \Phi}_N {\mathbf u} \|_H  \leq \| \mathcal{L}^{-1}\|_{\mathcal{B}(H^*, H)} \| f - {\bm \Psi}_N {\mathbf f} \|_{H^*}$. 

In order to expand $f$, we must know the exact expressions for the functions in $\Psi_N = \mathcal{L}\Phi_N$. In the next section, we define functions such that, in the context of \cref{eq:fpde}, these expressions are known.

\section[Orthogonal polynomials]{Extended and weighted orthogonal polynomials}
\label{sec:ops}

\subsection{Jacobi polynomials}
\label{sec:jacobi}

The Jacobi polynomials $\{ P^{(a,b)}_n(x) \}_{n\in \mathbb{N}}$ are a family of complete univariate bases of classical orthogonal polynomials on the interval $(-1,1)$ with weight parameters $a, b \in \mathbb{R}$ such that $a,b >-1$. They are orthogonal with respect to the weight $(1-x)^a (1+x)^b$. Let ${_2}F_1(\cdot,\cdot;\cdot;\cdot)$ and $(\cdot)_n$ denote the hypergeometric function \cite[Sec.~15.2]{OlverNIST} and Pochhammer symbol \cite[Sec.~5.2(iii)]{OlverNIST}, respectively. We pick the standard normalization such that \cite[Eq.~18.5.7]{OlverNIST}
\begin{align}
P^{(a,b)}_n(x) \coloneqq \frac{(a+1)_n}{n!} {}_{2}F_1\left(-n, 1+a+b+n;a+1;\frac{1}{2}(1-x)\right).
\end{align}
Let $(\cdot)_+$ denote the max function $(z)_+ \coloneqq \mathrm{max}(0,z)$ \cite[1.16.19]{OlverNIST}. Then we define the weighted Jacobi polynomial $Q^{(a,b)}_n(x)$ as 
\begin{align}
Q^{(a,b)}_n(x) \coloneqq (1-x)_+^a (1+x)_+^b P^{(a,b)}_n(x).
\end{align}
For any $x \in \mathbb{R} \backslash [-1,1]$, we fix $P^{(a,b)}_n(x) = Q^{(a,b)}_n(x) = 0$ for all $n \in \mathbb{N}_0$. 

The extended Jacobi functions $\eP^{(a,s)}_n(x)$, $s \in (-1, 0) \cup (0,1)$, $a>-1$, are defined as
\begin{align}
\eP^{(a,s)}_n(x) \coloneqq (-\Delta)^s Q^{(a,a)}_n(x). 
\label{def:extended-Jacobi}
\end{align}
Let $\Gamma(\cdot)$ denote the Gamma function. The following theorem provides explicit expressions for $\eP^{(a,s)}_n(x)$.
\begin{theorem}[Row 1, Tab.~3 in \cite{Gutleb2023}]
Consider $s \in (-1/2,0)\cup (0,1)$ and $a > -1$. Then, for any  $n \in \mathbb{N}_0$  and $x \in \mathbb{R}$, $|x| \neq 1$,
\begin{align}
\begin{split}
&\eP^{(a,s)}_n(x)
=
4^{s}\frac{\Gamma (a+n+1)}{n!} x^{n-2 \left\lfloor \frac{n}{2}\right\rfloor }\\
&\times
\resizebox{0.85\textwidth}{!}{$
\begin{cases}
\footnotesize
\frac{\pi \, {}_2F_1\left(-a+s-\left\lfloor \frac{n}{2}\right\rfloor, n+s-\left\lfloor \frac{n}{2}\right\rfloor +\frac{1}{2};n-2 \left\lfloor \frac{n}{2}\right\rfloor +\frac{1}{2};x^2\right)}{\sin \left(\pi  \left(2 \left\lfloor \frac{n}{2}\right\rfloor - n- s+\frac{1}{2}\right)\right)\Gamma \left(n-2 \left\lfloor \frac{n}{2}\right\rfloor +\frac{1}{2}\right) \Gamma \left(-n-s+\left\lfloor \frac{n}{2}\right\rfloor +\frac{1}{2}\right) \Gamma \left(a-s+\left\lfloor \frac{n}{2}\right\rfloor +1\right)} & |x|<1,  \\
-\frac{2^{-n-2 s} \sin (\pi  s) \Gamma (n+2 s+1) | x| ^{-2 \left\lfloor \frac{n-1}{2}\right\rfloor -2 s-3} \, _2F_1\left(s+\left\lfloor \frac{n}{2}\right\rfloor +1,\frac{2 \left\lfloor \frac{n-1}{2}\right\rfloor +3}{2} +s;\frac{2 n+3}{2}+a;\frac{1}{x^2}\right)}{\sqrt{\pi } \Gamma \left(\frac{2 n+3}{2} +a\right)} & |x|>1. \end{cases}$}
\end{split}
\label{eq:extendedJacobidef}
\end{align}
\end{theorem}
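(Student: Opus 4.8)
The plan is to pass to Fourier space, where the fractional Laplacian acts as multiplication by $|\xi|^{2s}$, and to exploit the classical Gegenbauer--Bessel transform pair. First I would record the parity structure: since the weight parameters coincide, $(1-x^2)_+^{a}$ is even and $P^{(a,a)}_n$ has parity $(-1)^n$, so by \cref{def:extended-Jacobi} the function $Q^{(a,a)}_n$ is even for even $n$ and odd for odd $n$. This is exactly the prefactor $x^{\,n-2\lfloor n/2\rfloor}$ in \cref{eq:extendedJacobidef}, and it explains why the remaining content is an \emph{even} function expressible through a hypergeometric in $x^2$. The floor functions $\lfloor n/2\rfloor$ and $\lfloor (n-1)/2\rfloor$ in the statement will turn out to encode precisely this even/odd split.

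Next I would compute the Fourier transform $\widehat{Q^{(a,a)}_n}$ using the classical identity that the Fourier (equivalently Hankel) transform of a weighted ultraspherical polynomial is a single Bessel function times a power of $\xi$, schematically
\[
\int_{-1}^{1}(1-x^2)^{a}\,P^{(a,a)}_n(x)\,\E^{\I x\xi}\,\dx
= C_{n,a}\,\I^{\,n}\,|\xi|^{-(a+1/2)}\,J_{n+a+1/2}(|\xi|),
\]
with an explicit Gamma-factor constant $C_{n,a}$. Because $(-\Delta)^s$ is the Fourier multiplier $|\xi|^{2s}$, the transform of $\eP^{(a,s)}_n$ is $|\xi|^{2s}$ times the right-hand side above, i.e.\ $\propto \I^{\,n}\,|\xi|^{2s-a-1/2}J_{n+a+1/2}(|\xi|)$.

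I would then recover $\eP^{(a,s)}_n(x)$ by inverting the transform. Using parity to reduce to a cosine (even $n$) or sine (odd $n$) transform leaves a Weber--Schafheitlin integral of the form $\int_0^\infty \xi^{\mu}J_\nu(\xi)\,\{\cos,\sin\}(x\xi)\,\mathrm{d}\xi$. Evaluating this integral is precisely where the regimes $|x|<1$ and $|x|>1$ separate: the Weber--Schafheitlin evaluation yields one ${}_2F_1$ with argument $x^2$ for $|x|<1$ and a different ${}_2F_1$ with argument $1/x^2$, carrying the algebraic tail $|x|^{-2\lfloor (n-1)/2\rfloor-2s-3}$, for $|x|>1$. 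The $\sin(\pi s)$ and cosecant prefactors appearing in \cref{eq:extendedJacobidef} then emerge from the reflection formula $\Gamma(z)\Gamma(1-z)=\pi/\sin(\pi z)$ used to normalise the two hypergeometric branches.

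The main obstacle I anticipate is twofold. Analytically, one must justify the multiplier computation for $u=Q^{(a,a)}_n$, which is only Sobolev/H\"older regular near $x=\pm1$; I would handle this through the distributional pairing guaranteed by the mapping properties in \cref{def:fracLap} and \cref{eq:weak}, and by treating $s$ as a complex parameter so that the Weber--Schafheitlin integral converges absolutely in a strip and the final formula extends by analytic continuation to all $s\in(0,1)$ and to the Riesz-potential regime $s\in(-1/2,0)$. Combinatorially, the real labor is the bookkeeping of Gamma prefactors: matching $C_{n,a}$, the Weber--Schafheitlin constant, and the reflection-formula rewriting to the exact normalisation in \cref{eq:extendedJacobidef}. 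I would pin down the constants against the base case $n=0$, where $Q^{(a,a)}_0\propto(1-x^2)_+^{a}$ and the result must reduce to the known Dyda formula for $(-\Delta)^s(1-|x|^2)_+^{a}$, and then propagate through the general $n$ by the Bessel-order shift $n\mapsto n+a+1/2$.
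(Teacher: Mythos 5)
The paper you are asked to compare against contains no proof of this statement at all: the theorem is imported verbatim from Row 1, Tab.~3 of \cite{Gutleb2023}, and the present paper's ``proof'' is that citation. So the only meaningful review is of your plan on its own merits. Your route --- parity reduction, the Gegenbauer--Bessel (Hankel) transform pair for $(1-x^2)_+^a P_n^{(a,a)}$, the multiplier $|\xi|^{2s}$, and inversion through the discontinuous Weber--Schafheitlin integrals, with the two branches of \cref{eq:extendedJacobidef} arising as the $|x|<1$ and $|x|>1$ sides of that discontinuous integral --- is the classical one, and it is essentially the method by which formulae of this type were first derived in the literature these tables descend from (Dyda's evaluation of $(-\Delta)^s(1-|x|^2)_+^a$, and the Mellin/Meijer-$G$ machinery of Dyda, Kuznetsov and Kwa\'snicki for solid harmonics times radial profiles). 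The structural predictions are all correct: $x^{\,n-2\lfloor n/2\rfloor}$ is exactly the parity factor, the hypergeometric arguments $x^2$ versus $1/x^2$ are the two Weber--Schafheitlin regimes, and the trigonometric prefactors do come out of reflection formulae. Whether \cite{Gutleb2023} itself argues this way or by a recurrence/induction route cannot be determined from this document, but as a strategy yours would succeed.

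Three points separate your outline from a complete proof. First, the inversion integral $\int_0^\infty \xi^{2s-a-1/2}J_{n+a+1/2}(\xi)\,\{\cos,\sin\}(x\xi)\,\mathrm{d}\xi$ has an oscillatory integrand decaying only like $\xi^{2s-a-1}$, so it fails to converge absolutely precisely in the cases of interest (e.g.\ $a=s>0$); your analytic-continuation fix is the standard and correct repair, but it is a step, not a remark --- you must verify that both sides of \cref{eq:extendedJacobidef} are analytic in the continuation parameter on a connected domain joining the absolute-convergence strip to the target parameters. (As a consistency check, the small-$\xi$ behaviour $\xi^{n+2s}$ of the integrand is exactly what produces the restriction $s>-1/2$ in the statement.) Second, you need a short argument that the Fourier-multiplier definition of $(-\Delta)^s$ agrees with the pointwise principal-value definition \cref{def:fracLap} for $u=Q_n^{(a,a)}$, which is not Schwartz and is unbounded near $x=\pm1$ when $a\in(-1,0)$; distributional equality plus continuity of both sides away from $|x|=1$ suffices for the theorem as stated, but this must be said. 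Third, the Gamma bookkeeping is lighter than you suggest: the Fourier constant $C_{n,a}$ and the Weber--Schafheitlin constant (with Bessel order $n+a+1/2$ and $\nu=\mp\tfrac12$ according to parity) are fully explicit for every $n$, so no ``propagation'' from a base case is needed --- one simply carries them through. Conversely, checking only against the $n=0$ Dyda formula does not by itself pin down the general-$n$ normalisation, so treat that as a sanity check rather than the mechanism fixing the constants.
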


When $a=s$, the expression \cref{eq:extendedJacobidef} simplifies. We find that, for $x \in (-1,1)$, $\eP^{(s,s)}_n(x)$ is equal, up to a constant, to the corresponding Jacobi polynomial\footnote{One could instead re-normalize $\eP^{(s,s)}_n$ such that $\eP^{(s,s)}_n(x) = P^{(s,s)}_n(x)$ for $x \in (-1,1)$. However, this has the unfortunate consequence of making the notation more obfuscated in the later results.} $P^{(s,s)}(x)$. 
\begin{theorem}[Row 1$^*$, Tab.~5 in \cite{Gutleb2023}]
\label{thm:fractionalwholespaceultra}
Consider $s \in (-1/2,0) \cup (0,1)$. Then, for any  $n \in \mathbb{N}_0$  and $x \in \mathbb{R}$, $|x| \neq 1$,
\begin{align}
\eP^{(s,s)}_{n}(x) = 
\resizebox{0.8\textwidth}{!}{$
\begin{cases}  \frac{4^s \Gamma \left(s+\left\lfloor \frac{n}{2}\right\rfloor +1\right) \Gamma \left(n+s-\left\lfloor \frac{n}{2}\right\rfloor +\frac{1}{2}\right)}{\left\lfloor \frac{n}{2}\right\rfloor ! \Gamma \left(n-\left\lfloor \frac{n}{2}\right\rfloor +\frac{1}{2}\right)} P_n^{(s,s)}(x) &|x|<1,  \\
-\frac{ \sin (\pi  s) x^{n-2 \left\lfloor \frac{n}{2}\right\rfloor } \Gamma (n+s+1) \Gamma (n+2 s+1) | x| ^{-2 \left\lfloor \frac{n-1}{2}\right\rfloor -2 s-3} \, _2F_1\left(s+\left\lfloor \frac{n-1}{2}\right\rfloor +\frac{3}{2},s+\left\lfloor \frac{n}{2}\right\rfloor +1;n+s+\frac{3}{2};\frac{1}{x^2}\right)}{2^n \sqrt{\pi } n! \Gamma \left(n+s+\frac{3}{2}\right)} & |x|>1.  \end{cases}$}
\end{align}
\end{theorem}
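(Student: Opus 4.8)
The plan is to specialize the general formula \cref{eq:extendedJacobidef} for $\eP^{(a,s)}_n$ to the case $a=s$ and then simplify each branch separately. The $|x|>1$ branch is essentially immediate, whereas the $|x|<1$ branch requires recognizing a terminating hypergeometric series as a symmetric Jacobi polynomial; the real work is bookkeeping of the scalar prefactor.

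For $|x|>1$, I would substitute $a=s$ directly into the second branch of \cref{eq:extendedJacobidef}. The third hypergeometric parameter $\tfrac{2n+3}{2}+a$ becomes $n+s+\tfrac{3}{2}$, matching the target, and the first two parameters are already in the stated form once one rewrites $\tfrac{2\lfloor (n-1)/2\rfloor+3}{2}+s=s+\lfloor (n-1)/2\rfloor+\tfrac{3}{2}$. The only genuine simplification is in the scalar prefactor: the factor $\Gamma(a+n+1)/\Gamma(\tfrac{2n+3}{2}+a)$ collapses to $\Gamma(n+s+1)/\Gamma(n+s+\tfrac{3}{2})$, and combining the leading $4^{s}$ with the $2^{-n-2s}$ inside the branch gives $4^{s}2^{-n-2s}=2^{-n}$. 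Collecting these reproduces the stated expression for $|x|>1$.

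For $|x|<1$ the key observation is that setting $a=s$ turns the first hypergeometric parameter $-a+s-\lfloor n/2\rfloor$ into the non-positive integer $-\lfloor n/2\rfloor$, so the ${}_2F_1$ terminates as a polynomial of degree $\lfloor n/2\rfloor$ in $x^{2}$, while the last Gamma factor $\Gamma(a-s+\lfloor n/2\rfloor+1)$ becomes $\lfloor n/2\rfloor!$. I would then dispatch the $\sin$--$\Gamma$ product in the denominator using the reflection formula $\Gamma(z)\Gamma(1-z)=\pi/\sin(\pi z)$. Writing the sine argument as $\pi(z+\lfloor n/2\rfloor)$ with $z=-n+\lfloor n/2\rfloor-s+\tfrac12$ (so that $z$ is exactly the argument of the offending Gamma factor) and extracting $\sin(\pi(z+\lfloor n/2\rfloor))=(-1)^{\lfloor n/2\rfloor}\sin(\pi z)$, the factor $\pi/(\sin(\cdots)\,\Gamma(z))$ collapses to $(-1)^{\lfloor n/2\rfloor}\Gamma(n-\lfloor n/2\rfloor+s+\tfrac12)$, which is precisely the $\Gamma(n+s-\lfloor n/2\rfloor+\tfrac12)$ appearing in the target numerator.

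It then remains to recognize the terminating series. I would split into the even case $n=2m$, where $x^{\,n-2\lfloor n/2\rfloor}=1$ and the series is ${}_2F_1(-m,\,m+s+\tfrac12;\,\tfrac12;\,x^{2})$, and the odd case $n=2m+1$, where $x^{\,n-2\lfloor n/2\rfloor}=x$ and the series is ${}_2F_1(-m,\,m+s+\tfrac32;\,\tfrac32;\,x^{2})$. In each case the product of the monomial and the series equals, up to an explicit constant, the even/odd Gegenbauer polynomial $C^{(s+1/2)}_{n}(x)$, which in turn satisfies $C^{(s+1/2)}_{n}(x)=\frac{(2s+1)_{n}}{(s+1)_{n}}P^{(s,s)}_{n}(x)$; this is the standard quadratic transformation relating symmetric Jacobi polynomials to hypergeometric functions of $x^{2}$. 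Substituting this identity, the parity sign $(-1)^{\lfloor n/2\rfloor}$ cancels against the $(-1)^{m}$ in the Gegenbauer representation, and the residual Pochhammer symbols together with the Legendre duplication formula $\Gamma(2z)=\pi^{-1/2}2^{2z-1}\Gamma(z)\Gamma(z+\tfrac12)$ collapse the constant to the compact ratio $4^{s}\Gamma(s+\lfloor n/2\rfloor+1)\Gamma(n+s-\lfloor n/2\rfloor+\tfrac12)/(\lfloor n/2\rfloor!\,\Gamma(n-\lfloor n/2\rfloor+\tfrac12))$ stated in the theorem. The main obstacle is exactly this final bookkeeping: keeping the even and odd normalizations, the reflection sign, and the duplication identity mutually consistent so that every spurious factor of $\sqrt{\pi}$, of $2$, and of $(-1)^{m}$ cancels cleanly; I verified the even case reduces to $\Gamma(2s+2m+1)/(2m)!$ before the final duplication step, which indeed equals the claimed ratio, and the odd case proceeds identically.
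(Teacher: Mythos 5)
Your derivation is correct, but note that it is necessarily a different route from the paper's, because the paper contains no proof of this statement at all: both the general formula \cref{eq:extendedJacobidef} (Row 1, Tab.~3 of \cite{Gutleb2023}) and the $a=s$ case stated here (Row 1$^*$, Tab.~5 of \cite{Gutleb2023}) are quoted as external results, and the text in between merely remarks that ``the expression simplifies'' when $a=s$. What you have done is make that remark precise: you show the second cited result is a consequence of the first by elementary Gamma-function and hypergeometric manipulations, so the paper becomes self-contained modulo \cref{eq:extendedJacobidef}. Your steps check out. For $|x|>1$ the substitution is indeed immediate, since $4^{s}2^{-n-2s}=2^{-n}$ and $\Gamma(\tfrac{2n+3}{2}+a)$ becomes $\Gamma(n+s+\tfrac32)$. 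For $|x|<1$, setting $a=s$ makes the first parameter $-\lfloor n/2\rfloor$ so the series terminates, the reflection formula with $z=-n+\lfloor n/2\rfloor-s+\tfrac12$ gives $\pi/\bigl(\sin(\pi(z+\lfloor n/2\rfloor))\Gamma(z)\bigr)=(-1)^{\lfloor n/2\rfloor}\Gamma(n+s-\lfloor n/2\rfloor+\tfrac12)$, and the monomial times the terminating ${}_2F_1$ is the Gegenbauer polynomial $C^{(s+1/2)}_n(x)$ up to the constant you describe, with the parity signs cancelling as claimed; the relation $C^{(s+1/2)}_n=\frac{(2s+1)_n}{(s+1)_n}P^{(s,s)}_n$ is the correct specialization of the standard Gegenbauer--Jacobi identity. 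I also confirm your bookkeeping: both parities collapse to the unified constant $\Gamma(n+2s+1)/n!$ (your even-case value $\Gamma(2s+2m+1)/(2m)!$, and $\Gamma(2s+2m+2)/(2m+1)!$ in the odd case), which by Legendre duplication equals the stated ratio $4^{s}\Gamma(s+\lfloor n/2\rfloor+1)\Gamma(n+s-\lfloor n/2\rfloor+\tfrac12)/\bigl(\lfloor n/2\rfloor!\,\Gamma(n-\lfloor n/2\rfloor+\tfrac12)\bigr)$. The trade-off between the two approaches is clear: the paper defers all analytic work (computing $(-\Delta)^s Q^{(a,a)}_n$ from the definition) to \cite{Gutleb2023}, while your argument proves nothing about the fractional Laplacian itself but verifies the internal consistency of the two quoted tables, which is a worthwhile check the paper does not perform.
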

\begin{remark}
We see that for $x \in (-1,1)$, \cref{thm:fractionalwholespaceultra} implies that $\eP_n^{(s,s)}$ are in fact scaled Jacobi polynomials for $x \in (-1,1)$. This is a key observation in later results.
\end{remark}

\begin{remark}
Thanks to their definition, we expect a solution family of functions consisting of extended Jacobi functions to accurately capture the algebraic decay of the solution tails. It can be shown that $\eP_0^{(s,s)}(x) \sim |x|^{-1-2s}$ as $|x| \to \infty$, matching the expected decay rate for $u \in H^s(\mathbb{R})$, and $\eP_n^{(s,s)}(x)$ decay at a faster rate for  $n \in \mathbb{N}$. Although this conjecture concerning the accurate representation of the algebraic tails is not proven in this work, we numerically verify it in \cref{sec:fractional-heat}. 
\end{remark}

\subsection{Affine transformations}
Our approach allows us to consider the unions of affine transformations of families of functions which we numerically found often improved the convergence and stability of the approximation. We outline how affine transformations are incorporated below.
\begin{definition}[Affine transformation]
\label{def:affine}
Consider an interval $I = [a,b] \subset \mathbb{R}$, $a<b$ and the affine transformation $y=2/(b-a) (x-(a+b)/2)$. We define the affine transformed function, centred at $I$, as $f^{I}(x) = f(y)$.
\end{definition}

Consider the intervals $I_k$, $k \in \{1,\dots,K\}$, and the expansion
\begin{align}
u(x) = \sum_{k=1}^K \sum_{j=0}^\infty \left[\tilde{u}_j^k \tilde{P}^{I_k, (-s,-s)}_j(x) + u_j^k Q^{I_k, (s,s)}_j(x)\right].
\label{eq:sum-expand-u}
\end{align}
We order these affine transformation in the quasimatrix ${\bm \Phi}(x)$ as follows:
\begin{align}
{\bm \Phi}(x) = 
\left( 
\tilde{P}^{I_1, (-s,-s)}_0(x) \cdots  \tilde{P}^{I_K, (-s,-s)}_0(x) 
\ | \  Q^{I_1, (s,s)}_0(x) \cdots  Q^{I_K, (s,s)}_0(x)
\ | \ \cdots
\right),
\end{align}
Thus \cref{eq:sum-expand-u} may be rewritten as $u(x) = {\bm \Phi}(x) {\mathbf u}$.

\begin{remark}
These results extend to the case where the Jacobi weight and fractional exponent do not match. The key observation is that, for $-1 < s+t < 1$,
\begin{align}
(-\Delta)^t \eP_n^{(a,s)}(x) = (-\Delta)^{t+s} Q_n^{(a,a)}(x) = \eP_n^{(a,s+t)}(x).
\end{align}
This allows one to apply the spectral method to solve general equations of the form \cref{eq:fpde}. A numerical example is provided in \cref{sec:multiple-exponents}.
\end{remark}

\subsection{Generalized Zernike polynomials}
\label{sec:zernike}
The generalized Zernike polynomials, $Z^{(b)}_{n,m,j}(x,y)$, $ b >-1$, are multivariate orthogonal polynomials in Cartesian coordinates defined on the unit disk, $\{(x,y) : x^2 + y^2 \leq 1\}$. The subscript $n$ denotes the polynomial degree and $(m,j)$ denotes the Fourier mode and sign. If $n$ is even, then $m \in \{0,2,\dots,n\}$ and, if $n$ is odd, then $m \in \{1,3,\dots,n\}$. If $m=0$, then $j=1$, otherwise $j \in \{0,1\}$. The generalized Zernike polynomials are orthogonal with respect to the weight $(1-r^2)^b$, where $r^2 = x^2 + y^2$. They may be defined via the Jacobi polynomials as:
\begin{align}
V_{m,j}(x,y) &\coloneqq r^m \sin(m\theta + j \pi/2)\\
Z^{(b)}_{n,m,j}(x,y) &\coloneqq V_{m, j}(x,y) P^{(b, m)}_{(n-m)/2}(2r^2-1).
\end{align}
We define the extended Zernike functions for $(x,y) \in \mathbb{R}^2$, $r \neq 1$, (when they exist) as
\begin{align}
\tilde{Z}^{(b,s)}_{n,m,j}(x,y) \coloneqq (-\Delta)^s [(1-r^2)^b_+ Z^{(b)}_{n,m,j}(x,y)].
\end{align}

The following theorem provides explicit expressions for $\tilde{Z}^{(s,s)}_{n,m,j}(x,y)$.
\begin{theorem}[Row A$^{**}$, Tab.~7 in \cite{Gutleb2023}]
Suppose that $b=s$, $s \in (-1,1)$ and $r^2 = x^2 + y^2$. Then, for any $n \geq 0$ and $(x,y) \in \mathbb{R}^2$, $r \neq 1$,
\begin{align}
\begin{split}
\tilde{Z}^{(s,s)}_{n,m,j}(x,y)
&=
V_{m,j}(x,y) \frac{4^s \Gamma(1 + s + n)}{\Gamma(n+1)} \\
&\times
\begin{cases} 
\frac{\Gamma \left(1+m+n+s\right) P_n^{(s,m)}(2r^2-1)}{\Gamma \left(1+n+m \right)} &r<1, \\ 
\frac{ (-1)^n \Gamma \left(1+m+n+s\right)  \, _2F_1\left(n+s+1,1+m+n+s;s+m+2 n+2;\frac{1}{r^2}\right)}{\Gamma (-n-s) \Gamma \left(s+m+2 n+2\right) r^{2(1+m+n+s)}} & r>1.
\end{cases}
\end{split}
\label{eq:extendedZernikedef}
\end{align}
\end{theorem}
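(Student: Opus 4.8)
The plan is to reduce the two-dimensional computation to a one-dimensional radial transform by exploiting the rotational invariance of $(-\Delta)^s$, and then to evaluate the resulting Bessel integrals in closed form. Since $(-\Delta)^s = \mathcal{F}^{-1}\,|\vect{\xi}|^{2s}\,\mathcal{F}$ commutes with rotations of $\mathbb{R}^2$ and the angular factor $\sin(m\theta + j\pi/2)$ is an angular harmonic, the operator maps a function of the separated form $V_{m,j}(x,y)\,g(r)$ to another function of the same separated form $V_{m,j}(x,y)\,h(r)$, with the same angular mode. I would therefore first factor out $V_{m,j}$ and reduce the statement to identifying the radial profile $h$ produced from the weighted input whose radial part (after peeling off the $r^m$ inside $V_{m,j}$) is $r^m(1-r^2)_+^s P^{(s,m)}_{(n-m)/2}(2r^2-1)$.

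For the radial reduction I would use that the planar Fourier transform of $V_{m,j}(x,y)\,g(r)$ equals, up to the standard factor $\I^m$, the same angular harmonic $\sin(m\phi + j\pi/2)$ in the frequency variable times the Hankel transform of order $m$ of the radial part. The crucial input, which I would establish (or cite) as a Sonine/Zernike-type integral, is that this Hankel transform collapses to a \emph{single} Bessel function: the weighted disk polynomial of degree $n$ and angular mode $m$ transforms to $\rho^{-(s+1)}J_{n+s+1}(\rho)$ up to an explicit Gamma-function constant, generalizing the classical Fourier transform of Zernike polynomials (the case $s=0$, which gives $J_{n+1}(\rho)/\rho$). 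Multiplying by the Fourier symbol $|\vect{\xi}|^{2s}=\rho^{2s}$ then yields $\rho^{s-1}J_{n+s+1}(\rho)$, and the radial factor $h$ is recovered by the inverse Hankel transform of order $m$, i.e.\ the integral $\int_0^\infty \rho^{s}\,J_{n+s+1}(\rho)\,J_m(\rho r)\,\mathrm{d}\rho$.

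This last integral is a Weber--Schafheitlin integral, whose value is given by two different hypergeometric expressions according to whether $r<1$ or $r>1$; this is precisely the source of the piecewise structure in \cref{eq:extendedZernikedef}. In the regime $r<1$ the integral produces a terminating ${}_2F_1$, which I would recognize as a Jacobi polynomial $P_n^{(s,m)}(2r^2-1)$ after the substitution $t=2r^2-1$, reproducing the first branch; in the regime $r>1$ the companion branch produces the algebraic tail $r^{-2(1+m+n+s)}$ multiplied by a ${}_2F_1(\cdot,\cdot;\cdot;1/r^2)$, reproducing the second branch. The hard part will be the bookkeeping rather than the strategy: verifying the convergence hypotheses for the Weber--Schafheitlin formula, tracking the products of Gamma functions so that the overall prefactor $4^s\Gamma(1+s+n)/\Gamma(n+1)$ and the distinct Gamma ratios in each branch emerge exactly as stated, and using the reflection-formula identities to reconcile the $\sin(\pi s)$ and $\Gamma(-n-s)$ factors appearing in the $r>1$ branch with the polynomial normalization of the $r<1$ branch.
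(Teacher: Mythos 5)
The paper contains no proof of this statement for you to be compared against: \cref{eq:extendedZernikedef} is imported verbatim (``Row A$^{**}$, Tab.~7'') from \cite{Gutleb2023} and used as a black box, so your derivation can only be judged on its own merits and against the methodology of that cited source. On those terms, your strategy is correct in outline and is the natural route to formulas of this type: rotational equivariance of $(-\Delta)^s$ fixes the angular mode, the planar Fourier transform of $V_{m,j}(x,y)g(r)$ reduces to an order-$m$ Hankel transform of the radial part, the Sonine/Zernike-type identity collapses the weighted radial factor $r^m(1-r^2)_+^s P^{(s,m)}_k(2r^2-1)$ to a single Bessel function proportional to $\rho^{-(s+1)}J_{m+2k+s+1}(\rho)$, and after multiplying by the symbol $\rho^{2s}$ the inverse Hankel transform $\int_0^\infty \rho^{s}J_{m+2k+s+1}(\rho)\,J_m(\rho r)\,\mathrm{d}\rho$ is exactly a Weber--Schafheitlin integral. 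The bookkeeping does close: writing the Weber--Schafheitlin parameters as $\mu=m+2k+s+1$, $\nu=m$, $\lambda=-s$, the $r<1$ branch terminates because the numerator parameter $\tfrac{\nu-\mu-\lambda+1}{2}=-k$ is a nonpositive integer, giving $P_k^{(s,m)}(2r^2-1)$; the $r>1$ branch gives $r^{-(m+2k+2s+2)}\,{}_2F_1\bigl(k+s+1,\,k+m+s+1;\,m+2k+s+2;\,1/r^2\bigr)$, and the factor $\Gamma(-k-s)$ in the stated denominator is precisely the factor $\Gamma\bigl(\tfrac{\nu-\mu+\lambda+1}{2}\bigr)$ of that formula, which by the reflection formula is where the $\sin(\pi s)$ you mention is hiding.

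Two points need repair before the constants can be matched. First, the index convention: the paper defines $Z^{(b)}_{n,m,j}=V_{m,j}P^{(b,m)}_{(n-m)/2}(2r^2-1)$, so $n$ there is the \emph{total} degree, yet in \cref{eq:extendedZernikedef} the symbol $n$ must be read as the \emph{radial} index $k=(n-m)/2$ --- otherwise the Jacobi subscript, the ${}_2F_1$ parameters, and the decay exponent are all off (note $r^{-2(1+m+k+s)}$ times the $r^m$ hidden inside $V_{m,j}$ gives total decay $r^{-(N+2s+2)}$ with $N=m+2k$ the true degree). Your sketch mixes the two conventions: you take the Bessel order to be $n+s+1$ with $n$ the total degree, but then identify the $r<1$ branch as $P_n^{(s,m)}(2r^2-1)$; pick one convention and carry it through, or the Gamma ratios will not reconcile. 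Second, the analytic justification: the input is not Schwartz (it is only H\"older continuous of order $s$ at $r=1$ when $s\in(0,1)$), so applying the multiplier definition and exchanging it with Hankel inversion needs an argument (Abel regularization or a distributional Hankel framework), and the Weber--Schafheitlin integral converges only conditionally (integrand $\sim\rho^{s-1}$ oscillating at frequencies $1\pm r$), which is fine for $r\neq 1$ and $|s|<1$ but should be stated as the reason the formula excludes $r=1$. These are standard repairs rather than obstructions; the strategy itself is sound.
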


\section{Frames}
\label{sec:frames}
As the family of approximating functions is not orthogonal, the expansion of a known function, $f(x)$, is nontrivial. In particular, the expansion need not be unique. For our purposes, we desire an expansion that approximates our known function to the required tolerance and the coefficients of the expansion are (relatively) small in magnitude \cite{adcock2021}.

We turn to the techniques used in frame theory \cite{Adcock2019, Adcock2020}. Essentially, we find the coefficients of the expansion that optimally interpolate the values of $f(x)$ (or a linear operator applied to $f$ if $f$ is not defined pointwise) at a set of collocation points in a least squares sense. Consider the collocation points $\vectt{x} = (x_1,\dots,x_M)$. Let ${\bm \Phi}(x)$ denote the quasimatrix for the expansion of the solution $u(x)$. Pick the number of functions in the truncation $N$.  As depicted in line 3 of \cref{alg:frame},  the least-squares matrix is given by
\begin{align}
X_{ij} = [\ell_j(\mathcal{L} {\bm \Phi})]_i, \;\; i=1,\dots,N, \; j=1\,\dots,M.
\label{def:least-squares-matrix}
\end{align}
Similarly we compute $\vectt{y}_j = \ell_j(f)$. As discussed, we choose $\ell_j(\mathcal{L} {\bm \Phi}) = (\mathcal{L} {\bm \Phi})(x_j)$, for the collocation points $\{x_j\}_{j=1}^M$,  provided  $(\mathcal{L} {\bm \Phi})(x_j)$ and $f(x_j)$ are well-defined. We then solve the following least-squares problem for the expansion coefficients ${\mathbf u}$:
\begin{align}
\min_{{\mathbf u} \in \mathbb{C}^N} \|X{\mathbf u} - \vectt{y}\|_{\ell^2}, 
\label{eq:least-squares}
\end{align}
so that $f \approx \mathcal{L} {\bm \Phi} {\mathbf u}$ and, therefore, $u \approx {\bm \Phi}{\mathbf u}$ (as discussed in \cref{sec:spectral-method}). In a frame setting, the least-squares matrix is often ill-conditioned for increasing $M$ and $N$. However, in practice we recover suitable least squares solutions if we sufficiently oversample the collocation points and use a truncated SVD solver. The remainder of this section will focus on explaining this phenomenon.

Consider the SVD factorization $X = U \Sigma V^\top$ with the convention where $\Sigma$ is a square diagonal matrix with the singular values arranged in decreasing magnitude along the diagonal. Given a precision tolerance $\epsilon$, a truncated SVD solver finds the first singular value such that $\Sigma_{i+1,i+1} < \epsilon < \Sigma_{i,i}$. Then, the submatrix $\Sigma_\epsilon \in \mathbb{R}^{i \times i}$, $[\Sigma_\epsilon]_{j,j}= \Sigma_{j,j}$, $j \in \{1,\dots,i\}$ is extracted. The $\epsilon$-truncated SVD projection of \cref{eq:least-squares} is given by
\begin{align}
{\mathbf u}_\epsilon = V_\epsilon \Sigma^{-1}_\epsilon U_\epsilon^\top {\mathbf y},
\end{align}
where $V_\epsilon$ and $U_\epsilon$ are the first $i$ columns of $V$ and $U$, respectively. 

\begin{remark}[Truncated SVD cost]
With $M>N$ the cost of computing the SVD of $X$ scales as $\mathcal{O}(MN^2)$. In the examples found in \cref{sec:examples}, we found that this cost is negligible but since our analytical results generalize to two and three dimensions, the cost of the factorization may eventually prove prohibitive. If required, then for suitable frames, the solve may be implemented in $\mathcal{O}(N\log^2 N)$ operations via the $AZ$-algorithm \cite{Coppe2020} and randomized linear algebra solvers for least-squares \cite{Halko2011, Meier2022, Meier2023}.
\end{remark}

\begin{remark}[Evaluation of the least-squares matrix]
Another potential computational bottleneck is the assembly of the least-squares matrix $X$ as defined in \cref{def:least-squares-matrix}, particularly if we are required to evaluate many hypergeometric functions at many collocation points. Although a direct evaluation of many ${_2}F_1$ functions may be slow, we note that the extended Jacobi functions satisfy the same three-term recurrence as their Jacobi polynomial counterparts. Thus one may use the forward recurrence for efficient evaluation of the extended Jacobi functions on the interval $[-1,1]$ and (F.~W.~J) Olver's and Miller's algorithm for evaluation off the interval $[-1,1]$, cf.~\cite[Sec.~2.3]{Gautschi2004}, \cite[Sec.~3.6]{OlverNIST} and \cite[App.~B]{Olver2019}. Alternatively, off the interval one can use a continued fraction approach advocated by Gautschi \cite{Gautschi1981}.
\end{remark}

We now introduce the notion of a \emph{frame}.
\begin{definition}[Frame]
Consider a Hilbert space $(H, (\cdot,\cdot)_H)$. An indexed family of functions $\{\phi_n\} \in H$ is called a \emph{frame} for $H$ if there exist constants $0 < c \leq C < \infty$ such that
\begin{align}
c\| v \|^2_H \leq \sum_n |( v, \phi_n )_H |^2 \leq C \|v \|^2_H \;\; \text{for all} \; \; v \in H.
\label{ineq:frame-condition}
\end{align}
\end{definition}

As the family of functions for the expansion of the right-hand side will vary according to the choice of \cref{eq:fpde}, it is helpful to construct a framework whereby if one shows that the family of functions for the solution expansion is a frame (which remains fixed), then the induced family of functions for the right-hand  side  is automatically a frame provided that the operator $\mathcal{L}$ in \cref{eq:fpde} operator is well-behaved. 
\begin{theorem}[Frames on the dual space]
\label{lem:range}
Suppose that the family of functions $\Phi = \{\phi_n\}$ is a frame on the Hilbert space $H$. Consider the bounded linear operator $\mathcal{L} : H \to  H^*$ where $H^*$ is the dual space of $H$. Moreover, assume that the adjoint operator   $\mathcal{L}^* : H \to H^*$ is bounded above and below with constants $M_b, M_c > 0$, respectively, i.e.~for all $u, v \in H$,
\begin{align}
\langle \mathcal{L} u, v \rangle_{H^*, H} = \langle \mathcal{L}^* v, u \rangle_{H^*, H} \;\;
\text{and} \;\;
M_c \| u \|_H \leq \| \mathcal{L}^* u \|_{H^*} \leq M_b \| u \|_{H}.
\end{align} 
Then $\mathcal{L}{\Phi} = \{\mathcal{L} \phi_n\}$ is a frame on the Hilbert space $H^*$.
\end{theorem} 
\begin{proof}
Let $\psi_n = \mathcal{L}\phi_n$, for any $n \in \mathbb{N}_0$. Let $R : H \to H^*$ denote the Riesz isomorphism and $R^{-1} : H^* \to H$ its inverse \cite[Ch.~D.3, Th.~2]{Evans2010}. Now consider any $f \in H^*$. The lower bound is derived as follows: 
\begin{align}
\begin{split}
&c M_c^2\| f \|^2_{H^*} 	
= c M_c^2 \| R^{-1} f \|^2_H
\leq c \| \mathcal{L}^* R^{-1} f \|^2_{H^*}
 =  c \| R^{-1} \mathcal{L}^* R^{-1} f \|^2_{H}\\
& \indent \leq  \sum_{n} ( R^{-1} \mathcal{L}^* R^{-1} f, \phi_n)^2_{H}
 = \sum_{n} \langle \mathcal{L}^* R^{-1} f, \phi_n\rangle^2_{H^*,H}
 =  \sum_{n} \langle  R^{-1} f, \mathcal{L} \phi_n\rangle^2_{H,H^*}
=\sum_{n} (  f, \psi_n )^2_{H^*}.
\label{eq:dual-frame1}
\end{split}
\end{align} 
Here $c$ is the lower bound for the frame condition of $\Phi$.  The first inequality follows from the lower-boundedness of the adjoint operator $\mathcal{L}^*$ and the second inequality follows from the frame condition of $\Phi$. The final equality follows by the definition of $\psi_n$.

Similarly, let $C$ denote the upper bound for the frame condition of $\Phi$. Then
\begin{align}
\begin{split}
&\sum_{n} (  f, \psi_n )^2_{H^*}
= \sum_{n} ( R^{-1} \mathcal{L}^* R^{-1} f, \phi_n)^2_{H} \\
& \indent \leq C \| R^{-1} \mathcal{L}^* R^{-1} f \|^2_{H} 
= C \| \mathcal{L}^* R^{-1} f \|^2_{H^*} 
\leq C M_b^2 \| R^{-1} f \|^2_{H} 
= C M_b^2 \|  f \|^2_{H^*} 
\end{split}
\end{align}
The first equality follows by backtracking the equalities from the right-hand side in \cref{eq:dual-frame1}. The first inequality follows from the frame property of $\Phi$ and the second inequality by the upper-boundedness assumption on $\mathcal{L}^*$. 
\end{proof} 

\subsection{Construction of frames for $\mathcal{L} = (\mathcal{I} + (-\Delta)^s)$}

In this subsection, we show that a family of functions consisting of weighted Jacobi polynomials and their fractional Laplacian counterparts ${\bm \Phi}(x)$ are a frame on a weighted Hilbert space. Then, by utilizing \cref{lem:range}, we conclude that $(\mathcal{I} + (-\Delta)^s){\bm \Phi}(x)$ is a frame on the dual space.

\begin{definition}[Weighted Lebesgue space]
Let $w :  [a,b]  \to [0,\infty)$ denote a Muckenhoupt weight \cite{muckenhoupt1972}. Then we denote the weighted Lebesgue space by $L^p_w(a,b)$ when equipped with the norm $\|f\|_{L^p_w(a,b)} \coloneqq (\int_{a}^b |f|^p w \, \dx)^{1/p}$. If $p=2$, then $L^2_w(a,b)$ is a Hilbert space equipped with the inner product $(f,g)_{L^2_w(a,b)} \coloneqq \int_{a}^b f g w \, \dx$.
\end{definition}

\begin{definition}[Hilbert space $H^s_w(\mathbb{R})$]
\label{def:Hsw}
 Let $\Omega \coloneqq \mathrm{int} \, (\supp \, w)$. Here $\mathrm{int}$ denotes the interior of a set and $\supp$ denotes the support of a function.  Then $H^s_w(\mathbb{R})$ denotes the Hilbert space 
\begin{align}
H^s_w(\mathbb{R}) \coloneqq
\{ u \in L^2_w( \Omega ) : \supp(u) \subseteq \supp(w), \; (-\Delta)^{s/2} u \in L^2(\mathbb{R})\},
\end{align}
equipped with the inner-product $$( u, v )_{H^s_w(\mathbb{R})} \coloneqq ( u, v )_{L^2_w( \Omega )} +  ((-\Delta)^{s/2} u,  (-\Delta)^{s/2} v )_{L^2(\mathbb{R})}.$$ 
\end{definition}

\begin{remark}
When considering nonlocal operators defined on $\mathbb{R}$, such as $(-\Delta)^s$, we note that even if the support of a function $u$ is compactly contained in $\mathbb{R}$, this does not mean $(-\Delta)^s u$ is compactly supported in $\mathbb{R}$. This is the motivation behind considering the inner-product defined on all of $\mathbb{R}$ in \cref{def:Hsw}.
\end{remark}

For the remainder of this section we define, for $s \in (0,1)$, $w_s(x) \coloneqq (1-x^2)_+^{-s}$. Moreover, as defined in \cref{thm:fractionalwholespaceultra}, let $c_{s,n} \in \mathbb{R}$, $n \in \mathbb{N}_0$, denote the constants such that, for all $x \in (-1,1)$, $\eP_n^{(s,s)}(x) = c_{s,n} P_n^{(s,s)}(x)$.

The following lemma proves orthogonality of weighted and extended Jacobi polynomials. This is heavily utilized for the frame result in \cref{prop:frame}. A similar result may be found in \cite[Prop.~3.6]{Papadopoulos2022e}. 
\begin{lemma}[Orthogonality]
\label{lem:orthogonality}
$\{ Q_n^{(s,s)} \}_{n \in \mathbb{N}_0}$ and $\{ \tilde{P}_n^{(-s,-s)} \}_{n \in \mathbb{N}_0}$ satisfy
\begin{align}
( \tilde{P}_n^{(-s,-s)}, \tilde{P}_m^{(-s,-s)} )_{H^s_{w_s}(\mathbb{R})}  = \tilde{C}_{n,m} \delta_{n,m} \;\;\text{and} \;\;
( Q_n^{(s,s)}, Q_m^{(s,s)} )_{H^s_{w_s}(\mathbb{R})}  = C_{n,m} \delta_{n,m},
\end{align}
where $\delta_{nm}$ denotes the Kronecker delta and $C_{n,m}, \tilde{C}_{n,m} \in \mathbb{R}$. 
\end{lemma}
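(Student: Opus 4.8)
The plan is to split the $H^s_w(\mathbb{R})$ inner product into its two constituent pieces from \cref{def:Hsw}, namely the weighted $L^2_w$ term and the Dirichlet-type term $((-\Delta)^{s/2}\cdot,(-\Delta)^{s/2}\cdot)_{L^2(\mathbb{R})}$, and to show that each of the four resulting integrals collapses to a classical Jacobi orthogonality integral on $(-1,1)$. Two facts make this possible. First, on $(-1,1)$ both families reduce to scaled Jacobi polynomials: $Q_n^{(s,s)}(x)=(1-x^2)^s P_n^{(s,s)}(x)$ by definition, while $\tilde P_n^{(s,s)}(x)\propto P_n^{(s,s)}(x)$ and $\tilde P_n^{(-s,-s)}(x)\propto P_n^{(-s,-s)}(x)$ by \cref{thm:fractionalwholespaceultra} (with parameter $s$ and $-s$ respectively). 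Second, the fractional Laplacian interchanges the two families: $(-\Delta)^s Q_n^{(s,s)}=\tilde P_n^{(s,s)}$ by \cref{def:extended-Jacobi}, and $(-\Delta)^s \tilde P_n^{(-s,-s)}=Q_n^{(-s,-s)}$ since $\tilde P_n^{(-s,-s)}=(-\Delta)^{-s}Q_n^{(-s,-s)}$ and the Riesz potentials compose to the identity.

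For the weighted $Q^{(s,s)}$ family I would first treat the $L^2_w$ term directly: substituting $Q_n^{(s,s)}=(1-x^2)_+^{s}P_n^{(s,s)}$ and $w=(1-x^2)_+^{-s}$, the integrand becomes $(1-x^2)^{s}P_n^{(s,s)}P_m^{(s,s)}$ on $(-1,1)$ and vanishes elsewhere, leaving $\int_{-1}^1 (1-x^2)^s P_n^{(s,s)}P_m^{(s,s)}\,\dx$, which is $\propto\delta_{n,m}$ by orthogonality of the $P_n^{(s,s)}$ against $(1-x^2)^s=(1-x)^s(1+x)^s$. For the Dirichlet term I would invoke \cref{eq:weak} to rewrite it as the duality pairing $\langle (-\Delta)^s Q_n^{(s,s)},Q_m^{(s,s)}\rangle=\langle \tilde P_n^{(s,s)},Q_m^{(s,s)}\rangle$. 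Because $Q_m^{(s,s)}$ is supported on $[-1,1]$, this pairing localizes to an integral over $(-1,1)$, where $\tilde P_n^{(s,s)}$ is again a scalar multiple of $P_n^{(s,s)}$; the integral is once more $\int_{-1}^1(1-x^2)^s P_n^{(s,s)}P_m^{(s,s)}\,\dx$ up to a constant, hence diagonal. Summing the two contributions gives $C_{n,m}=C_n\delta_{n,m}$.

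The extended $\tilde P^{(-s,-s)}$ family is handled symmetrically. Here the weight $w=(1-x^2)_+^{-s}$ is itself supported on $[-1,1]$, so the $L^2_w$ term localizes to $\int_{-1}^1 \tilde P_n^{(-s,-s)}\tilde P_m^{(-s,-s)}(1-x^2)^{-s}\,\dx$; replacing each factor by its proportional Jacobi polynomial on $(-1,1)$ yields $\propto\int_{-1}^1 P_n^{(-s,-s)}P_m^{(-s,-s)}(1-x^2)^{-s}\,\dx$, the Jacobi orthogonality integral with weight $(1-x^2)^{-s}=(1-x)^{-s}(1+x)^{-s}$, which is diagonal and converges precisely because $-s>-1$ — this is where the hypothesis $s<1$ enters. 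For the Dirichlet term, \cref{eq:weak} gives $\langle (-\Delta)^s\tilde P_n^{(-s,-s)},\tilde P_m^{(-s,-s)}\rangle=\langle Q_n^{(-s,-s)},\tilde P_m^{(-s,-s)}\rangle$, and since $Q_n^{(-s,-s)}=(1-x^2)_+^{-s}P_n^{(-s,-s)}$ is supported on $[-1,1]$ the pairing again localizes to the same weighted Jacobi integral up to a constant. Adding the two pieces gives $\tilde C_{n,m}=\tilde C_n\delta_{n,m}$.

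The main obstacle is the rigorous treatment of the Dirichlet term rather than the algebra, which is routine once localization is in place. Three points need care: establishing that $\tilde P_n^{(-s,-s)}$ and $Q_n^{(s,s)}$ genuinely lie in $H^s(\mathbb{R})$, so that \cref{eq:weak} applies — this rests on the algebraic tail decay recorded in the remark following \cref{thm:fractionalwholespaceultra}; justifying the composition $(-\Delta)^s(-\Delta)^{-s}=\mathcal{I}$ on these specific functions so that $(-\Delta)^s\tilde P_n^{(-s,-s)}=Q_n^{(-s,-s)}$; and verifying that the resulting $H^{-s}$--$H^s$ pairing coincides with the Lebesgue integral, so that it truly localizes against the compactly supported factor. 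Once these are secured, each inner product is a finite combination of convergent Jacobi orthogonality integrals, delivering the claimed diagonal structure with real constants $C_{n,m}$ and $\tilde C_{n,m}$.
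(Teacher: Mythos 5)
Your proposal is correct and follows essentially the same route as the paper's own proof: split the $H^s_w(\mathbb{R})$ inner product into its $L^2_w$ and Dirichlet parts, move the half-Laplacian across so that $(-\Delta)^s$ converts one family into the other ($(-\Delta)^s Q_n^{(s,s)}=\tilde P_n^{(s,s)}$, $(-\Delta)^s\tilde P_n^{(-s,-s)}=Q_n^{(-s,-s)}$), localize to $(-1,1)$ via the compactly supported factor, and invoke \cref{thm:fractionalwholespaceultra} to reduce everything to classical Jacobi orthogonality. If anything, your write-up is more explicit than the paper about the technical justifications (membership in $H^s(\mathbb{R})$, composition of the Riesz potentials, identification of the duality pairing with the Lebesgue integral), which the paper's proof uses silently.
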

\begin{proof}
Let  $c_{s,n} \in \mathbb{R}$, $n \in \mathbb{N}_0$, denote the constants such that, for all $x \in (-1,1)$, $\eP_n^{(s,s)}(x) = c_{s,n} P_n^{(s,s)}(x)$. Then,
\begin{align}
\begin{split}
( Q_n^{(s,s)}, Q_m^{(s,s)} )_{H^s_{w_s}(\mathbb{R})}
& =  ( Q_n^{(s,s)}, Q_m^{(s,s)} )_{L^2_{w_s}(-1,1)}+ ( (-\Delta)^{s/2} Q_n^{(s,s)}, (-\Delta)^{s/2} Q_m^{(s,s)} )_{L^2(\mathbb{R})} \\
& = ( Q_n^{(s,s)}, {P}_m^{(s,s)} )_{L^2(-1,1)}+ \int_{\mathbb{R}}  Q_n^{(s,s)} \cdot (-\Delta)^{s} Q_m^{(s,s)} \, \dx \\
&  = ( Q_n^{(s,s)}, {P}_m^{(s,s)} )_{L^2(-1,1)} + ( Q_n^{(s,s)}, c_{s,m} P_m^{(s,s)} )_{L^2(-1,1)} = {C}_{n,m} \delta_{n,m}.
\end{split}
\end{align}
The result for $\{ \eP_n^{(-s,-s)} \}_{n \in \mathbb{N}_0}$ follows similarly.
\end{proof}

We define $\{\hat{Q}_n^{(s,s)}\}$ and $\{\hat{P}_n^{(-s,-s)}\}$ as the orthonormalized families of functions, $\{Q_n^{(s,s)}\}$ and $\{\eP_n^{(-s,-s)}\}$, respectively, with respect to the $H^s_{w_s}(\mathbb{R})$ inner-product, i.e.
\begin{align}
\hat{Q}_n^{(s,s)}(x) \coloneqq \frac{{Q}_n^{(s,s)}(x)}{\| {Q}^{(s,s)}_n \|_{H^s_{w_s}(\mathbb{R})}} \;\;
\text{and} \;\;
\hat{P}_n^{(-s,-s)}(x) \coloneqq \frac{\tilde{P}_n^{(-s,-s)}(x)}{\| \tilde{P}^{(-s,-s)}_n \|_{H^s_{w_s}(\mathbb{R})}}.
\end{align}

We now state and prove two lemmas which aid with deriving the result in \cref{prop:frame}.
\begin{lemma}
\label{lem:a}
Consider any $u \in H^s_{w_s}(\mathbb{R})$ with $s \in (0,1)$. Then
\begin{align}
(u, \hat{Q}_n^{(s,s)})_{H^s_{w_s}(\mathbb{R})} = (1+c_{s,n}) (u, \hat{Q}_n^{(s,s)})_{L^2_{w_s}(-1,1)}.
\end{align}
\end{lemma}
\begin{proof}
First recall that, for $x \in \mathbb{R}  \backslash \{-1, 1\}$,
\begin{align}
(-\Delta)^s \hat{Q}^{(s,s)}_n(x) 
= \frac{(-\Delta)^s {Q}^{(s,s)}_n(x)}{\| {Q}^{(s,s)}_n \|_{H^s_{w_s}(\mathbb{R})}}
=  \frac{\tilde{P}_n^{(s,s)}}{\|{Q}_n^{(s,s)}\|_{H^s_{w_s}(\mathbb{R})}}.
\end{align}
Then we note that
\begin{align}
\begin{split}
(u, \hat{Q}_n^{(s,s)})_{H^s_{w_s}(\mathbb{R})}  
&= (u, \hat{Q}_n^{(s,s)})_{L^2_{w_s}(-1,1)} + ((-\Delta)^{s/2} u, (-\Delta)^{s/2}\hat{Q}_n^{(s,s)})_{L^2(\mathbb{R})}\\
&= (u, \hat{Q}_n^{(s,s)})_{L^2_{w_s}(-1,1)} + \int_{\mathbb{R}} u \cdot (-\Delta)^{s} \hat{Q}_n^{(s,s)}\, \dx\\
& = (u, \hat{Q}_n^{(s,s)})_{L^2_{w_s}(-1,1)} + \int_{\mathbb{R}} u \cdot  \frac{{\tilde P}_n^{(s,s)}}{\|{Q}_n^{(s,s)}\|_{H^s_{w_s}(\mathbb{R})}} \, \dx\\
& = (u, \hat{Q}_n^{(s,s)})_{L^2_{w_s}(-1,1)} + c_{s,n} \int_{-1}^1 u \cdot  \frac{{P}_n^{(s,s)}}{\|{Q}_n^{(s,s)}\|_{H^s_{w_s}(\mathbb{R})}} \, \dx\\
& = (u, \hat{Q}_n^{(s,s)})_{L^2_{w_s}(-1,1)} + c_{s,n} \int_{-1}^1 u \cdot  \frac{(1-x^2)^s {P}_n^{(s,s)}}{\|{Q}_n^{(s,s)}\|_{H^s_{w_s}(\mathbb{R})}} w_s(x) \, \dx\\
& = (1+c_{s,n}) (u, \hat{Q}_n^{(s,s)})_{L^2_{w_s}(-1,1)}.
\end{split}
\end{align}
The fourth equality follows from the fact that $\supp(u) \subseteq \supp(w_s) = [-1,1]$. 
\end{proof}

\begin{lemma}
\label{lem:b}
Consider any $u \in H^s_{w_s}(\mathbb{R})$ with $s \in (0,1)$. Define the constants, for $n \in \mathbb{N}_0$, 
\[
b_n \coloneqq \frac{(u, \hat{Q}_n^{(s,s)})_{L^2_{w_s}(-1,1)}}{\|\hat{Q}_n^{(s,s)}\|^2_{L^2_{w_s}(-1,1)}}.
\]
Then, for any $N \in \mathbb{N}_0$,
\begin{align}
\| \sum_{n=0}^N b_n \hat{Q}_n^{(s,s)}\|^2_{H^s_{w_s}(\mathbb{R})} = \sum_{n=0}^N (1+c_{s,n})^2 (u, \hat{Q}_n^{(s,s)})^2_{L^2_{w_s}(-1,1)}.
\end{align}
\end{lemma}
\begin{proof}
Note that
\begin{align}
\begin{split}
\| \sum_{n=0}^N b_n \hat{Q}_n^{(s,s)}\|^2_{H^s_{w_s}(\mathbb{R})} 
 = (\sum_{n=0}^N b_n \hat{Q}_n^{(s,s)}, \sum_{m=0}^N b_m \hat{Q}_m^{(s,s)})_{H^s_{w_s}(\mathbb{R})} 
= \sum_{n=0}^N b_n^2 
= \sum_{n=0}^N \frac{(u, \hat{Q}_n^{(s,s)})^2_{L^2_{w_s}(-1,1)}}{\|\hat{Q}_n^{(s,s)}\|^4_{L^2_{w_s}(-1,1)}}.
\label{eq:lem1}
\end{split}
\end{align}
Moreover,
\begin{align}
\begin{split}
1 
&= \| \hat{Q}^{(s,s)}_n \|^2_{H^s_{w_s}(\mathbb{R})} \\
& = \|\hat{Q}^{(s,s)}_n\|^2_{L^2_{w_s}(-1,1)} + ((-\Delta)^{s/2} \hat{Q}_n^{(s,s)}, (-\Delta)^{s/2}\hat{Q}_n^{(s,s)})_{L^2(\mathbb{R})} \\
&=  \|\hat{Q}^{(s,s)}_n\|^2_{L^2_{w_s}(-1,1)} + \int_{\mathbb{R}} \hat{Q}_n^{(s,s)} \cdot (-\Delta)^{s} \hat{Q}_n^{(s,s)}\, \dx\\
&=  \|\hat{Q}^{(s,s)}_n\|^2_{L^2_{w_s}(-1,1)} + \int_{\mathbb{R}} \hat{Q}_n^{(s,s)} \cdot  \frac{{\tilde P}_n^{(s,s)}}{\|{Q}_n^{(s,s)}\|_{H^s_{w_s}(\mathbb{R})}}\, \dx\\
&=\|\hat{Q}^{(s,s)}_n\|^2_{L^2_{w_s}(-1,1)} + c_{s,n} \int_{-1}^1 \hat{Q}^{(s,s)}_n \cdot  \frac{(1-x^2)^s {P}_n^{(s,s)}}{\|{Q}_n^{(s,s)}\|_{H^s_{w_s}(\mathbb{R})}} w_s(x) \, \dx\\
&= (1+c_{s,n})  \|\hat{Q}^{(s,s)}_n\|^2_{L^2_{w_s}(-1,1)}.
\label{eq:lem2}
\end{split}
\end{align}
\cref{eq:lem2} implies that $ \|\hat{Q}^{(s,s)}_n\|^4_{L^2_{w_s}(-1,1)} = (1+c_{s,n})^{-2}$. Plugging this into \cref{eq:lem1}, we arrive at the result.
\end{proof}

\begin{theorem}[Frame on $H^s_{w_s}(\mathbb{R})$]
\label{prop:frame}
The space spanned by $\{\hat{Q}_n^{(s,s)}\}_{n=0}^\infty \cup \{\hat{P}_n^{(-s,-s)}\}_{n=0}^\infty$ is a frame on $H^s_{w_s}(\mathbb{R})$ for any $s \in (0,1)$.
\end{theorem}
\begin{proof}
Consider any $u \in H^s_{w_s}(\mathbb{R})$ with $s \in (0,1)$. Recall that we must show the existence of constants $c, C>0$ such that the frame condition \cref{ineq:frame-condition} is satisfied.

\noindent
\textbf{Upper bound.} Since $\{\hat{P}^{(-s,-s)}_n\}$ and $\{\hat{Q}^{(s,s)}_n\}$ are orthonormal sequences in $H^s_{w_s}(\mathbb{R})$, then Bessel's inequality \cite[Ch.~8, Sec.~4--5]{Yosida1980} implies that
\begin{align}
\begin{split}
\sum_{n=0}^\infty \left[ (u, \hat{P}_n^{(-s,-s)})^2_{H^s_{w_s}(\mathbb{R})} + (u, \hat{Q}_n^{(s,s)})^2_{H^s_{w_s}(\mathbb{R})} \right]
\leq 
2 \| u \|^2_{H^s_{w_s}(\mathbb{R})}.
\end{split}
\end{align}

\noindent
\textbf{Lower bound.}
By the Corollary of Theorem 1 in \cite{muckenhoupt1969}, we have that
\begin{align}
\lim_{N \to \infty} \| u - \sum_{n=0}^N b_n \hat{Q}_n^{(s,s)} \|_{L^2_{w_s}(-1,1)} = 0 \;\;
\text{where} \;\;
b_n = \frac{(u, \hat{Q}_n^{(s,s)})_{L^2_{w_s}(-1,1)}}{ \|\hat{Q}_n^{(s,s)}\|^2_{L^2_{w_s}(-1,1)}}.
\label{eq:prop:frame1}
\end{align}
Moreover, for any $N \in \mathbb{N}_0$, we observe that
\begin{align}
\begin{split}
\| \sum_{n=0}^N b_n \hat{Q}_n^{(s,s)}\|^2_{H^s_{w_s}(\mathbb{R})} 
&= \sum_{n=0}^N (1+c_{s,n})^2 (u, \hat{Q}_n^{(s,s)})^2_{L^2_{w_s}(-1,1)}\\
& = \sum_{n=0}^N (u, \hat{Q}_n^{(s,s)})^2_{H^s_{w_s}(\mathbb{R})} \\
& \leq \sum_{n=0}^\infty (u, \hat{Q}_n^{(s,s)})^2_{H^s_{w_s}(\mathbb{R})} 
\leq \| u \|^2_{H^s_{w_s}(\mathbb{R})},
\label{eq:prop:frame2}
\end{split}
\end{align}
where the first equality follows from \cref{lem:b}, the second equality from \cref{lem:a} and the final inequality by an application of Bessel's inequality. Thus the sequence $\sum_{n=0}^N b_n \hat{Q}_n^{(s,s)}$ for $N \in \mathbb{N}_0$ is uniformly bounded in $H^s_{w_s}(\mathbb{R})$. As $H^s_{w_s}(\mathbb{R})$ is a Hilbert space and therefore, weakly sequentially compact, we may extract a subsequence (not relabeled) that converges weakly in $H^s_{w_s}(\mathbb{R})$ to a limit. As weak limits are unique, we may identify this limit as $u$ via \cref{eq:prop:frame1}. Since a squared-norm is convex and bounded below, it is weakly lower-semicontinuous \cite[Ch.~8.2, Th.~1]{Evans2010}. Hence,
\begin{align}
\begin{split}
\| u \|^2_{H^s_{w_s}(\mathbb{R})} 
\leq \liminf_{N \to \infty} \| \sum_{n=0}^N b_n \hat{Q}_n^{(s,s)}\|^2_{H^s_{w_s}(\mathbb{R})} 
= \liminf_{N \to \infty}  \sum_{n=0}^N (u, \hat{Q}_n^{(s,s)})^2_{H^s_{w_s}(\mathbb{R})} \leq \sum_{n=0}^\infty (u, \hat{Q}_n^{(s,s)})^2_{H^s_{w_s}(\mathbb{R})},
\label{eq:prop:frame3}
\end{split}
\end{align}
where the equality follows from the second equality in \cref{eq:prop:frame2}. From \cref{eq:prop:frame3} we deduce the frame lower bound condition
\begin{align}
\| u \|^2_{H^s_{w_s}(\mathbb{R})}  \leq \sum_{n=0}^\infty \left[ (u, \hat{Q}_n^{(s,s)})^2_{H^s_{w_s}(\mathbb{R})} + (u, \hat{P}_n^{(-s,-s)})^2_{H^s_{w_s}(\mathbb{R})} \right].
\end{align}
\end{proof}

A basic building block for a frame suitable for expanding the solution of $(\mathcal{I} + (-\Delta)^s)u = f$ is $\Phi = \{\hat{Q}_n^{(s,s)}\}_{n=0}^\infty \cup \{\hat{P}_n^{(-s,-s)}\}_{n=0}^\infty$. The operator $\mathcal{L} = (-\Delta)^s + \mathcal{I}$ implies we expand the right-hand side $f(x)$ in  $\Psi =\{(-\Delta)^s \hat{Q}^{(s,s)}_n +  \hat{Q}^{(s,s)}_n \}_{n \in \mathbb{N}_0} \cup \{ (-\Delta)^s \hat{P}^{(-s,-s)}_n +  \hat{P}^{(-s,-s)}_n \}_{n \in \mathbb{N}_0}$. In the next corollary we show that by utilizing \cref{lem:range} and \cref{prop:frame}, we may deduce that the space spanned by $\Psi$ is also a frame.

\begin{corollary}[Frame on $H^s_{w_s}(\mathbb{R})^*$]
\label{cor:rhs-frame}
The space spanned by $\Psi =\{(-\Delta)^s \hat{Q}^{(s,s)}_n +  \hat{Q}^{(s,s)}_n \}_{n \in \mathbb{N}_0} \cup \{ (-\Delta)^s \hat{P}^{(-s,-s)}_n +  \hat{P}^{(-s,-s)}_n \}_{n \in \mathbb{N}_0}$ is a frame on $H^s_{w_s}(\mathbb{R})^*$.
\end{corollary} 
\begin{proof}
Let $\mathcal{L} = (-\Delta)^s + \mathcal{I}$. In \cref{prop:frame} it was shown that $\Phi = \mathcal{L}^{-1} \Psi$ is a frame on $H = H^s_{w_s}(\mathbb{R})$. By \cref{lem:range}, it is sufficient to show that  $\mathcal{L}^* u \in H^*$ is bounded above and below for any $u \in H$. A calculation reveals
\begin{align}
\| \mathcal{L}^* u \|_{H^*} = \sup_{v \in H} \frac{ | (u,v)_{H}|}{\| v\|_{H}}.
\label{cor:dual-frame1}
\end{align}
By picking $v = u$ on the right hand side of \cref{cor:dual-frame1}, we conclude the lower bound with constant one. Similarly an application of the Cauchy--Schwarz inequality provides the upper bound with constant one.
\end{proof}

We conclude this section with the following a priori estimate which constitutes the main theorem of this work.
\begin{theorem}[A priori estimate]
\label{th:apriori}
Consider equation \cref{eq:fpde} and suppose that the conditions of \cref{lem:range} hold. Let ${\bm \Phi}$ denote the quasimatrix of a frame for the Hilbert space $H = H^s_w(\mathbb{R})$ with $H^s_w(\mathbb{R})$ as defined in \cref{def:Hsw}. Consider the truncated quasimatrices ${\bm \Phi}_N = (\phi_1 \; \phi_2 \; \cdots \phi_N)$ and ${\bm \Psi}_N = \mathcal{L}{\bm \Phi}_N$ and choose the bounded linear functionals $\ell_j$, $j=1,\dots,M$, in \cref{alg:frame} such that $\ell_j(f)$ are well-defined for a given $f \in H^s_w(\mathbb{R})^* \subseteq H^{-s}(\mathbb{R})$. Suppose that ${\mathbf u}_N \in \mathbb{R}^N$ is computed via \cref{alg:frame}. Then, there exist constants $\lambda, \kappa > 0$ such that
\begin{align}
\begin{split}
\| u - {\bm \Phi}_N {\mathbf u}_N \|_{H}
 \leq \| \mathcal{L}^{-1}\|_{\mathcal{B}(H^*, H)}
 \inf_{{\mathbf v} \in \mathbb{C}^N} 
\left\{
\| f -  {\bm \Psi}_N {\mathbf v} \|_{H^*} + \kappa \| f -  {\bm \Psi}_N {\mathbf v} \|_M + \epsilon \lambda \| {\mathbf v}\|_{\ell^2}
 \right\},
\end{split}
\end{align}
where $\| v \|^2_M \coloneqq \sum_{j=1}^M \ell_j(v)^2$. 
\end{theorem}

\begin{remark}
The constants $\lambda$ and $\kappa$ are dependent on truncated SVD tolerance $\epsilon$, the collocation points, and $N$ as well as the problem and the frame. Ideally their magnitudes are on the order of one as $N \to \infty$ for careful choices of the collocation points, in which case we are guaranteed to reach an accuracy of $\epsilon$ if $N$ is taken sufficiently large. For a thorough discussion of the constants and their behaviour, we refer the reader to \cite{Adcock2020}.
\end{remark}

\begin{proof}[Proof of \cref{th:apriori}]
First note that
\begin{align}
\| u - {\bm \Phi}_N {\mathbf u}_N \|_{H} 
= \| \mathcal{L}^{-1} f - \mathcal{L}^{-1} {\bm \Psi}_N {\mathbf u}_N \|_{H}
\leq  \| \mathcal{L}^{-1}\|_{\mathcal{B}(H^*, H)} \|  f - {\bm \Psi}_N {\mathbf u}_N \|_{H^*}.
\end{align}
By \cref{lem:range}, $\Psi = \mathcal{L}\Phi$ is a frame on $H^*$. Moreover, by definition, ${\mathbf u}_N$ is the vector obtained by conducting an $\epsilon$-truncated SVD projection for frame expansion of $f$, as described in \cref{eq:least-squares}. Thus, by leveraging Theorem 3.7 in \cite{Adcock2020}, we conclude the result.
\end{proof}

\begin{remark}[Convergence rates]
Suppose that the right-hand side $f$ in \cref{eq:fpde} is smooth and has an asymptotic algebraic decay (or faster) when $|x| \to \infty$. As a rule of thumb, if one constructs a solution frame $\Phi$ containing extended and weighted Jacobi or Zernike functions, centred on various affine-transformed intervals or balls, that cover the support of $f$ prior to where the asymptotic decay dominates, then \cref{th:apriori} indicates we expect \emph{spectral} convergence to the solution of \cref{eq:fpde}. The rough argument is that the induced frame $\Psi$ for $f$ includes functions that are piecewise polynomials prior to the asymptotic decay, with increasing degree as $N \to \infty$. Moreover, $\Psi$ also includes functions that can match an asymptotic algebraic (or faster) decay. Proving such a conjecture is beyond the scope of this work, however, we numerically verify this observation in \cref{sec:examples}.
\end{remark}

\section{Time-dependent problems}
\label{sec:time-dependent}
Implementing solvers for fractional-in-space time dependent problems is notoriously difficult due to the slow decay of the solutions and the accumulation of errors in the coefficients of the expansion. If the domain is truncated then the artificial boundary layers will eventually dominate the error in the solution. Similarly, although many time-stepping schemes are initially stable, the coefficients of the spatial expansion often become increasingly larger in magnitude leading to floating point cancellation error and a breakdown in the solution for large time $t$. 

Our spectral method may be coupled with any Runge--Kutta (RK) method allowing for an arbitrary order method in time and spectral in space. Consider the fractional-in-space time-dependent equation:
\begin{align}
\partial_t u + F(t, u) = 0,\; u(x,0) = u_0(x), \label{eq:arb}
\end{align}
where $u(\cdot, t) \in H^s(\mathbb{R})$ for a.e.~$t \geq 0$. For instance, in the fractional heat equation, we have $F(t,u) =  (-\Delta)^s u(x,t)$ such that \cref{eq:arb} becomes
\begin{align}
\partial_t  u(x,t) + (-\Delta)^s u(x,t) = 0,\; u(x,0) = u_0(x). \label{eq:heat}
\end{align}
Now given $u(x, t_j) = v_j(x)$ and some $t_{j+1} = t_j + \delta t$, an $m$-stage RK method approximates $u(x, t_{j+1})$ with
\begin{align}
u(x, t_{j+1}) \approx v_{j+1}(x) = v_j(x) + \delta t \sum_{i=1}^m b_i k_i(x),
\label{eq:rk2}
\end{align}
where for all $1 \leq i \leq m$ the stages $k_i$ satisfy
\begin{align}
k_i + F\left(t + c_i \delta t, u + \delta t \sum_{l=1}^m A_{il} k_l \right) = 0.
\label{eq:rk}
\end{align}
The coefficients $b_i$, $c_i$ and $A_{il}$ are chosen so that the resulting method has the required degree of accuracy as well as other favourable properties e.g.~stability. \cref{eq:rk} is of the form \cref{eq:fpde}. Thus we may use the frame expansion to discretize in space as described in \cref{alg:rk} for \cref{eq:heat}.

\begin{algorithm}[ht]
\caption{$m$-stage Runge--Kutta method for the fractional heat equation \cref{eq:heat}.}
\label{alg:rk}
\begin{algorithmic}[1]
\Input{
$A \in \mathbb{R}^{m \times m}$, $b_i$, $i \in \{1,\dots,m\}$. \Comment{$m$-stage Runge--Kutta coefficients.}\\
${\mathbf u}_0$ \Comment{Coefficients for the initial state $u_0(\cdot)$.}\\ 
$X$, $X_*$ {\Comment{Least-squares matrices for ${\bm \Phi}_N(x)$ and $ (-\Delta)^s {\bm \Phi}_N(x)$.}}\\
 $\delta t$, J \Comment{Time step and maximum time step iterations.}
\vspace{2mm}
}
\State{Assemble $X_A = I_m \otimes X + \delta t (A \otimes X_*)$, where $\otimes$ denotes the Kronecker product and $I_m$ is the $m \times m$ identity matrix.}
\For{$j = 1, 2, \dots, J$}
\State{${\mathbf y} \gets X_* {\mathbf u}_{j-1}$.}
\State{Via an  $\epsilon$-truncated SVD projection, compute \newline
\hspace*{6mm}${\bm k}_A \approx \mathrm{argmin}_{{\mathbf v} \in \mathbb{C}^{Nm}}\| X_A {\mathbf v} - (\underbrace{{\mathbf y}^\top \cdots {\mathbf y}^\top}_{m \, \text{repeats}})^\top \|_{\ell^2}$.}
\State{Extract the $m$ vectors $\{{\bm k}_i\}_{i = 1}^m$, ${\bm k}_A = ({\bm k}_1^\top \, \cdots {\bm k}_m^\top)^\top$.}
\State{${\mathbf u}_j \gets {\mathbf u}_{j-1} + \delta t \sum_{i=1}^m b_i {\bm k}_i$.}
\EndFor
\end{algorithmic}
\end{algorithm}

By considering \cref{eq:heat}, setting $m=A_{11}=c_1=b_1 = 1$ in \cref{eq:rk2}--\cref{eq:rk}, and multiplying the result by $\delta t$, one recovers the implicit Euler discretization of \cref{eq:heat}:
\begin{align}
v_{j+1}(x) + \delta t (-\Delta)^s v_{j+1}(x) = v_{j}(x). 
\label{eq:heat2}
\end{align}
Consider the Hilbert space $H$ such that $(-\Delta)^s : H \to H^*$ and a truncated quasimatrix frame ${\bm \Phi}_N = (\phi_1,\dots,\phi_N)$ for $H$. Let ${\bm \Phi}_N^* = (\mathcal{I} + \delta t (-\Delta)^s) {\bm \Phi}_N$. Then, the implicit Euler discretization of \cref{eq:heat} amounts to the following: given the coefficient vector, ${\mathbf u}_j$, at time-step $j$, the next time-step, ${\mathbf u}_{j+1}$ satisfies
\begin{align}
{\mathbf u}_{j+1} = {\mathbf u}^*_{j} \;\; \text{where} \;\; {\bm \Phi}^*_N(x) {\mathbf u}^*_{j} = (\mathcal{I} + \delta t (-\Delta)^s) {\mathbf u}^*_{j}   \approx {\bm \Phi}_N(x) {\mathbf u}_{j}.
\label{eq:timestep:1}
\end{align}
In practice we compute ${\mathbf u}^*_{j}$ by an $\epsilon$-truncated SVD projection. Hence, for some $h_j > 0$, the approximate equality in \cref{eq:timestep:1} satisfies
\begin{align}
\| {\bm \Phi}^*_N(x) {\mathbf u}^*_{j} - {\bm \Phi}_N(x) {\mathbf u}_{j} \|_{H^*} \leq h_j \|  {\bm \Phi}_N(x) {\mathbf u}_{j} \|_{H^*}.
\label{eq:timestep}
\end{align}

In the remainder of this section we show that an implicit Euler discretization in time coupled with a frame expansion  in space, as described in \cref{sec:spectral-method}, of the fractional heat equation, results in the expected convergence, i.e.~$\mathcal{O}(\delta t + h)$ where $h  =\max_j h_j$. This result is confirmed in the numerical results in \cref{sec:examples} and typically, for sufficiently smooth data and large $M$, we have $h \leq \max \{ c_1\epsilon, \mathrm{exp}(-c_2N) \}$ for some $c_1, c_2 > 0$.

\begin{lemma}
\label{lem:bounded}
Consider the Hilbert space $H^s_\omega(\mathbb{R})$, where $\omega(x) = 1$ in $\Omega \subseteq \mathbb{R}$, for an arbitrary open interval $\Omega$, and $\omega(x) = 0$ in $\mathbb{R} \backslash \Omega$. Let $\mathcal{L} = \mathcal{I} + (-\Delta)^s$. Fix any $u^* \in H^s_\omega(\mathbb{R})$ and also denote by $u^* \in H^s_\omega(\mathbb{R})^*$ the embedding of $u^*$ into $H^s_\omega(\mathbb{R})^*$. Then, we have that
\begin{align}
\| u^* \|_{H^s_\omega(\mathbb{R})^*} \leq \| \mathcal{L}^{-1} \|_{\mathcal{B}(H^s_\omega(\mathbb{R}),H^s_\omega(\mathbb{R}))} \| u^* \|_{H^s_\omega(\mathbb{R})}.
\end{align}
\end{lemma}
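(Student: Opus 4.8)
The plan is to show that, for this indicator weight, the operator $\mathcal{L} = \mathcal{I} + (-\Delta)^s$ is \emph{exactly} the Riesz map of $H \coloneqq H^s_\omega(\mathbb{R})$, and then to read off the dual norm directly from the Riesz representation theorem. First I would write the dual norm as the supremum
\begin{align*}
\| u^* \|_{H^*} = \sup_{v \in H, \, v \neq 0} \frac{|\langle u^*, v \rangle_{H^*, H}|}{\| v \|_{H}},
\end{align*}
where $\langle u^*, \cdot \rangle$ is the canonical embedding pairing, which on $H$ is the $L^2_\omega(\mathbb{R})$ inner product $(u^*, \cdot)_{L^2_\omega(\mathbb{R})}$.

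The crucial structural fact is that $\mathcal{L}$ represents the inner product of $H$. Repeating the integration by parts used in \cref{cor:rhs-frame} together with the weak form \cref{eq:weak}, one has $\langle \mathcal{L}w, v \rangle = (w, v)_{L^2_\omega(\mathbb{R})} + ((-\Delta)^{s/2}w, (-\Delta)^{s/2}v)_{L^2(\mathbb{R})} = (w,v)_H$ for all $w, v \in H$; here the first term is genuinely the weighted $L^2_\omega$ inner product because $\omega \equiv 1$ on $\Omega$ and every element of $H$ is supported in $\overline{\Omega}$, so that $(\cdot,\cdot)_{L^2_\omega(\mathbb{R})}$ and $(\cdot,\cdot)_{L^2(\mathbb{R})}$ agree on $H$. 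Consequently $\mathcal{L}$ is an isometric isomorphism of $H$ onto $H^*$ and $\mathcal{L}^{-1}$ exists. Setting $w = \mathcal{L}^{-1}u^*$ (the inverse applied to the embedded $u^*$) I would rewrite the embedding pairing as
\begin{align*}
\langle u^*, v \rangle = \langle \mathcal{L} \mathcal{L}^{-1} u^*, v \rangle = (\mathcal{L}^{-1}u^*, v)_H,
\end{align*}
substitute this into the supremum, and apply the Cauchy--Schwarz inequality, which is attained at $v = \mathcal{L}^{-1}u^*$. This gives the exact identity $\| u^* \|_{H^*} = \| \mathcal{L}^{-1} u^* \|_H$, and bounding the right-hand side by $\| \mathcal{L}^{-1} \|_{\mathcal{B}(H,H)} \| u^* \|_H$ closes the argument.

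I expect the main obstacle to be interpretational bookkeeping rather than hard analysis. One must fix the convention that $\mathcal{L}^{-1}$ in the statement denotes the solution operator $H^* \to H$ precomposed with the canonical embedding $H \hookrightarrow H^*$, so that $\| \mathcal{L}^{-1} \|_{\mathcal{B}(H, H)}$ is both meaningful and finite (finiteness follows since the embedding $H \hookrightarrow H^*$ is bounded). The one weight-specific point that genuinely needs checking is the collapse $(\cdot,\cdot)_{L^2_\omega(\mathbb{R})} = (\cdot,\cdot)_{L^2(\mathbb{R})}$ on $H$: this fails for the general weight of \cref{prop:frame} but holds here precisely because $\omega$ is the indicator of $\Omega$, and it is exactly this simplification that upgrades $\mathcal{L}$ from the merely bounded, positive-definite operator of \cref{cor:rhs-frame} to the \emph{exact} Riesz map, yielding the clean identity for the dual norm rather than a two-sided estimate with constants.
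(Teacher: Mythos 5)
Your proposal is correct and follows essentially the same route as the paper: identify $\mathcal{L} = \mathcal{I} + (-\Delta)^s$ as the Riesz map of $H^s_\omega(\mathbb{R})$, deduce the identity $\| u^* \|_{H^s_\omega(\mathbb{R})^*} = \| \mathcal{L}^{-1} u^* \|_{H^s_\omega(\mathbb{R})}$, and then bound by the operator norm of $\mathcal{L}^{-1}$. The only differences are presentational --- the paper invokes the Riesz representation theorem's norm equality directly where you rederive it via Cauchy--Schwarz, and you make explicit the indicator-weight collapse $(\cdot,\cdot)_{L^2_\omega(\mathbb{R})} = (\cdot,\cdot)_{L^2(\mathbb{R})}$ and the bookkeeping for $\mathcal{L}^{-1} \in \mathcal{B}(H,H)$, both of which the paper leaves implicit.
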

\begin{proof}
There exists a unique $u \in H^s_\omega(\mathbb{R})$ such that, for all $v \in H^s_\omega(\mathbb{R})$  \cite[Ch.~D.3, Th.~2]{Evans2010},
\begin{align}
(u, v)_{H^s_\omega(\mathbb{R})} = \langle \mathcal{L} u, v \rangle_{H^s_\omega(\mathbb{R})^*,H^s_\omega(\mathbb{R})} = \langle u^*, v \rangle_{H^s_\omega(\mathbb{R})^*,H^s_\omega(\mathbb{R})},
\end{align}
and $\| u^*\|_{H^s_\omega(\mathbb{R})^*} = \| u \|_{H^s_\omega(\mathbb{R})}$. Hence, the Riesz map is $u = \mathcal{L}^{-1} u^*$. Thus,
\begin{align}
\| u^*\|_{H^s_\omega(\mathbb{R})^*} = \| u \|_{H^s_\omega(\mathbb{R})} = \|  \mathcal{L}^{-1} u^* \|_{H^s_\omega(\mathbb{R})} \leq \| \mathcal{L}^{-1} \|_{\mathcal{B}(H^s_\omega(\mathbb{R}),H^s_\omega(\mathbb{R}))} \| u^* \|_{H^s_\omega(\mathbb{R})}.
\end{align}
\end{proof}

\begin{theorem}[Implicit Euler convergence]
\label{th:implicit-euler}
Consider the fractional heat equation \cref{eq:heat} and choose the uniform time discretization points $t_0, \dots, t_J$. Define $\delta t =t_1 - t_0$ and let ${\bm \Phi}_N(x)$ denote the truncated quasimatrix of a frame on the Hilbert space $H = H^s_\omega(\mathbb{R})$ with $H^s_\omega(\mathbb{R})$ as defined in \cref{lem:bounded}. Let $\mathcal{L}_{\delta t} = (\mathcal{I} + \delta t (-\Delta)^s)$ which corresponds to an implicit Euler discretization in time. Fix ${\bm \Phi}_N^*(x) = \mathcal{L}_{\delta t}{\bm \Phi}_N(x)$. Suppose that $\| \mathcal{L}^{-1}_{\delta t} \|_{{\mathcal{B}(H^*,H)}}$ and  $\| \mathcal{L}_1^{-1} \|_{{\mathcal{B}(H,H)}}$ are bounded and there exists a Lipschitz constant $L>0$ such that for any $w_1, w_2 \in H$, $\|(-\Delta)^s (w_1 - w_2) \|_{H^*} \leq L \| w_1 - w_2\|_H$. Fix the number of collocation points $M$ and the required SVD tolerance $\epsilon$. Define $h = \max_{j \in \{0,\dots,J\}} h_j$ with $h_j$ being defined as in \cref{eq:timestep}. We denote the exact solution of \cref{eq:heat} by $u(x,t)$. Suppose that we have a ${\mathbf u}_0$ and $0<h_0 \leq h <1$ such that 
\begin{align}
\| u_0 - {\bm \Phi}_N {\mathbf u}_{0}\|_{H} \leq h_0 \|  u_0\|_{H}.
\end{align}
Then, for some $C_0> 0$, the following error bound holds:
\begin{align}
\| u(\cdot, t_j) - {\bm \Phi}_N {\mathbf u}_j \|_{H} \leq   C_0 \delta t  +C_1(u_0,j) C_2(h,j) h,
\end{align}
where 
$
C_2(h,j)=   \sum_{i=0}^{j}  (1+h)^{i}$ and $
C_1(u_0,j) =  \| \mathcal{L}^{-1}_{\delta t}\|^{j}_{\mathcal{B}(H^*,H)}  \| \mathcal{L}^{-1}_1\|^{j}_{\mathcal{B}(H,H)} \| u_0\|_H.
$
\end{theorem}
\begin{proof}
Let $u_j(x)$, $j \in \{0, \dots, J\}$, denote the solutions of
\begin{align}
\frac{u_{j+1}(x) - u_j(x)}{\delta t} + (-\Delta)^s u_{j+1}(x) = 0, \;\; u_0(x) = u(x, t_0).
\end{align}
By assumption $(-\Delta)^s : H \to H^*$ is Lipschitz continuous and thus there exists a constant $C_0>0$ independent of $\delta t$ such that
\begin{align}
\| u(\cdot, t_j) - u_j \|_H \leq C_0 \delta t.
\label{eq:euler4}
\end{align}
\cref{eq:euler4} follows from classical results for implicit Euler discretizations, e.g.~\cite[Sec.~5]{Farago2013} where the Eucledian norms have been replaced by $\| \cdot \|_H$ and $\| \cdot \|_{H^*}$ as appropriate.  Hence,
\begin{align}
\label{eq:euler1}
\| u(\cdot, t_j) - {\bm \Phi}_N {\mathbf u}_j \|_H \leq C_0 \delta t + \| u_j  - {\bm \Phi}_N {\mathbf u}_j \|_H.
\end{align}
Now, by recalling the implicit Euler time step \cref{eq:timestep} and utilizing \cref{lem:bounded},
\begin{align}
\begin{split}
\label{eq:euler2}
&\| u_j  - {\bm \Phi}_N {\mathbf u}_j \|_H = \| \mathcal{L}^{-1}_{\delta t} (\mathcal{L}_{\delta t}u_j  - \mathcal{L}_{\delta t}{\bm \Phi}_N {\mathbf u}_j) \|_H\\
&\indent \leq \| \mathcal{L}^{-1}_{\delta t}\|_{\mathcal{B}(H^*,H)} \| u_{j-1} - {\bm \Phi}^*_N {\mathbf u}_j \|_{H^*}\\
&\indent \leq  \| \mathcal{L}^{-1}_{\delta t}\|_{\mathcal{B}(H^*,H)} \left( \|  {\bm \Phi}_N {\mathbf u}_{j-1}   - {\bm \Phi}^*_N {\mathbf u}^*_{j-1} \|_{H^*} + \| u_{j-1} - {\bm \Phi}_N {\mathbf u}_{j-1} \|_{H^*} \right)\\
&\indent \leq  \| \mathcal{L}^{-1}_{\delta t}\|_{\mathcal{B}(H^*,H)} \left( h \|  {\bm \Phi}_N {\mathbf u}_{j-1} \|_{H^*} + \| \mathcal{L}_1^{-1} \|_{\mathcal{B}(H,H)} \| u_{j-1} - {\bm \Phi}_N {\mathbf u}_{j-1} \|_{H} \right) \\
&\indent \leq  
h \sum_{i=0}^{j-1} \| \mathcal{L}^{-1}_{\delta t}\|^{j-i}_{\mathcal{B}(H^*,H)}  \| \mathcal{L}^{-1}_1\|^{j-i-1}_{\mathcal{B}(H,H)} \|  {\bm \Phi}_N {\mathbf u}_{i} \|_{H^*} \\
&\indent \indent \indent \indent +   \| \mathcal{L}^{-1}_{\delta t}\|^{j}_{\mathcal{B}(H^*,H)}  \| \mathcal{L}^{-1}_1\|^{j}_{\mathcal{B}(H,H)} \| u_{0} - {\bm \Phi}_N {\mathbf u}_{0} \|_H.
\end{split}
\end{align}
We note that, for any $i \in \{0, 1, \dots, j-1\}$,
\begin{align}
\label{eq:euler3}
\begin{split}
& \|  {\bm \Phi}_N {\mathbf u}_{i} \|_{H^*} \leq   \| \mathcal{L}^{-1}_1\|_{\mathcal{B}(H,H)} \|  {\bm \Phi}_N {\mathbf u}_{i} \|_{H} 
=  \| \mathcal{L}^{-1}_1\|_{\mathcal{B}(H,H)} \|  {\bm \Phi}_N {\mathbf u}^*_{i-1} \|_{H} \\
&\indent  \leq  \| \mathcal{L}^{-1}_1\|_{\mathcal{B}(H,H)}  \| \mathcal{L}^{-1}_{\delta t}\|_{\mathcal{B}(H^*,H)}  \|  {\bm \Phi}^*_N {\mathbf u}^*_{i-1} \|_{H^*} \\
& \indent  \leq (1+h)  \| \mathcal{L}^{-1}_1\|_{\mathcal{B}(H,H)}  \| \mathcal{L}^{-1}_{\delta t}\|_{\mathcal{B}(H^*,H)}  \|  {\bm \Phi}_N {\mathbf u}_{i-1} \|_{H^*}\\
& \indent  \leq  (1+h)^i  \| \mathcal{L}^{-1}_1\|_{\mathcal{B}(H,H)}^i  \| \mathcal{L}^{-1}_{\delta t}\|^i_{\mathcal{B}(H,H^*)}  \|  {\bm \Phi}_N {\mathbf u}_{0} \|_{H^*} \\
& \indent  \leq    (1+h)^{i+1}  \| \mathcal{L}^{-1}_1\|_{\mathcal{B}(H,H)}^{i+1}  \| \mathcal{L}^{-1}_{\delta t}\|^i_{\mathcal{B}(H,H^*)} \| u_0 \|_{H},
\end{split}
\end{align}
By assumption $ \| u_{0} - {\bm \Phi}_N {\mathbf u}_{0} \|_H \leq h \| u_0 \|_H$. Hence, by utilizing the bound in \cref{eq:euler3} for \cref{eq:euler2} and the subsequent bound of $\| u_j  - {\bm \Phi}_N {\mathbf u}_j \|_H$ into \cref{eq:euler1}, we deduce the result. 
\end{proof}

\section{Examples}
\label{sec:examples}
In this section we provide several numerical examples. In all the examples we choose collocation points $\{x_j\}_{j=1}^M$ and fix the linear functionals in \cref{alg:frame} as $\ell_j(f) = f(x_j)$. 

\textbf{Code availability:} For reproducibility, an implementation of the solver as well as scripts to generate the plots and solutions can be found in FractionalFrames.jl \cite{fractionalframes.jl} and archived on Zenodo \cite{fractionalframes-zenodo-v5}. The implementation is written in Julia \cite{Bezanson2017} and heavily relies on the ClassicalOrthogonalPolynomials.jl \cite{ClassicPoly.jl2023} and HypergeometricFunctions.jl \cite{Hyper.jl2023,slevinsky2025} packages.

\subsection{The Gaussian}
\label{sec:gaussian}
In our first example we consider the exact solution $u(x) =  \E^{-x^2}$. We note that \cite[Prop.~4.2]{Sheng2020},
\begin{align}
(\mathcal{I} + (-\Delta)^{s}) u(x)  =  \E^{-x^2} + 4^s \frac{\Gamma(s+1/2)}{\Gamma(1/2)} {_1}F_1(s+1/2;1/2;-x^2), \label{eq:ex2-1}
\end{align}
where ${_1}F_1$ denotes the Kummer confluent hypergeometric function \cite[Sec.~16.2]{OlverNIST}. We investigate \cref{eq:ex2-1} when $s=1/3$.

We pick $\bigcup_{k=1}^5 \{ \eP_n^{I_k,(-1/3,-1/3)} \}_{n=0}^\infty \cup \{ Q_n^{I_k, (1/3,1/3)} \}_{n=0}^\infty$, as the solution family of functions, where $I_1,\dots,I_5$ are $[-5,-3]$, $[-3,-1]$, $[-1,1]$, $[1,3]$, and $[3,5]$, respectively. $\eP^{I_k}_n(x)$ and $Q^{I_k}_n(x)$ denote the affine-transformed extended Jacobi functions and weighted Jacobi polynomials, respectively, as defined in \cref{sec:jacobi} and \cref{def:affine}. Let $N$ denote the number of functions in the approximation frame. We choose $N$ equally spaced points in $[a+\epsilon, b-\epsilon]$, $\epsilon = 10^{-2}$, where $a, b$ represent the endpoints of each interval $I_k$ as well as $N$ equally spaced points in $[-10+\epsilon,-5-\epsilon]$ and $[5+\epsilon,10-\epsilon]$. This results in $7N$ collocation points.  We have chosen the intervals $I_k$ and the endpoints $\{-10,10\}$ to completely cover the support of the right-hand side prior to where the asymptotic decay dominates. For this example, we found that equally spaced points produced similar results when compared to Chebsyshev-like distributions. Hence, we opt for equally spaced points within the intervals for this example and the subsequent one-dimensional examples in this section. 

A semi-log plot of the convergence for the right-hand side and the solution is given in \cref{fig:gaussian:convergence}. The error is measured in the $\ell^\infty$-norm as measured at the collocation points. We observe approximately spectral convergence as we include more functions in the expansion, with the right-hand side error  plateauing when it reaches an order of $10^{-14}$ in magnitude and the solution error at $10^{-11}$. 

Since we are not guaranteed that the family of functions is a frame on all $\mathbb{R}$, in \cref{fig:gaussian:coeffs} we investigate the behaviour of the expansion coefficients of the right-hand side. In particular we plot the $\ell^\infty$-norm of the coefficient vector as we increase the number of functions in the expansion. Despite the lack of a strict frame condition, we achieve bounded coefficients with a magnitude on the order of one for $N>40$.   We observe that the error plateau from $N=150$ coincides with the ``settling down" of the norm of the coefficient vector. We conjecture that for $N>150$, the least squares solver is discarding singular values associated with the higher order (weighted) Jacobi functions. It may be possible to develop a stopping criterion based on this observation although this lies outside the scope of this work.   
\begin{figure}[h!]
\centering
\subfloat[$\ell^\infty$-norm error.]{\includegraphics[width =0.43 \textwidth]{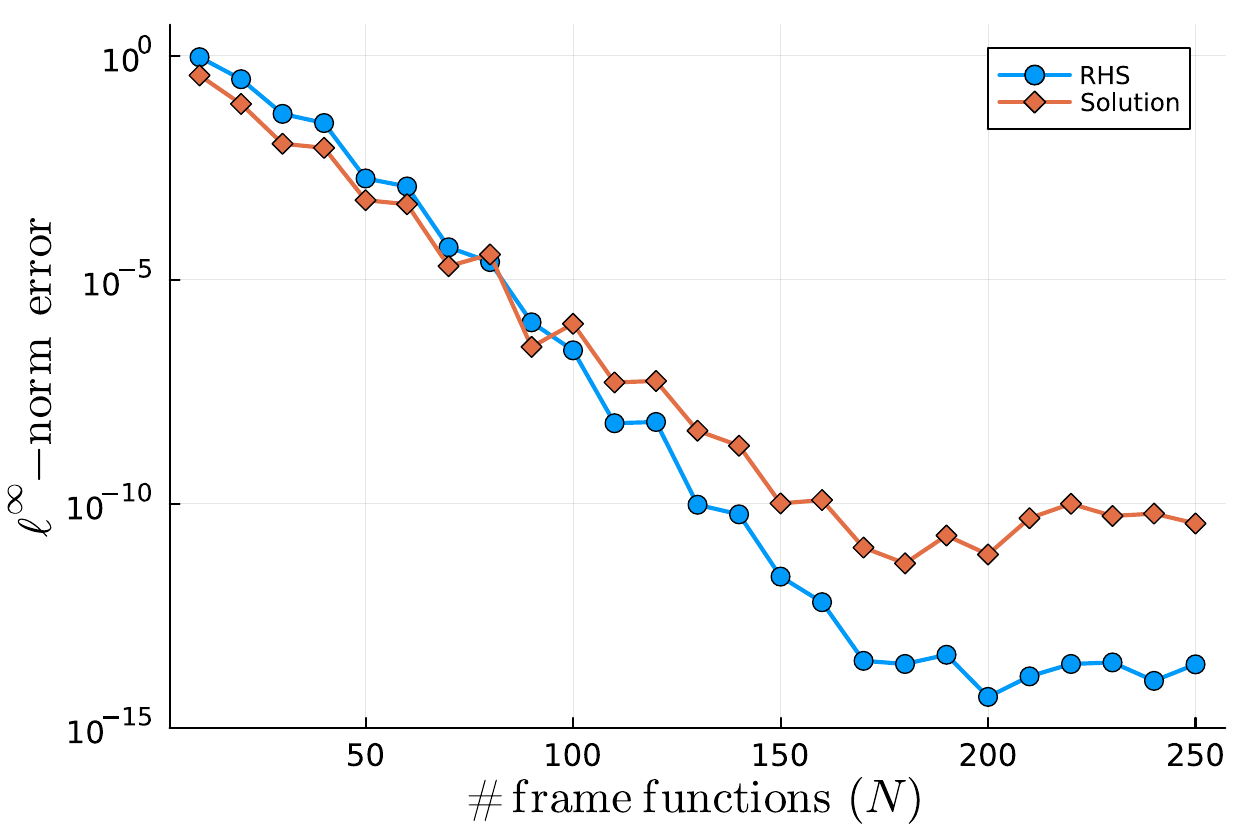} \label{fig:gaussian:convergence}}
\subfloat[$\ell^\infty$-norm of ${\mathbf u}$.]{\includegraphics[width =0.43 \textwidth]{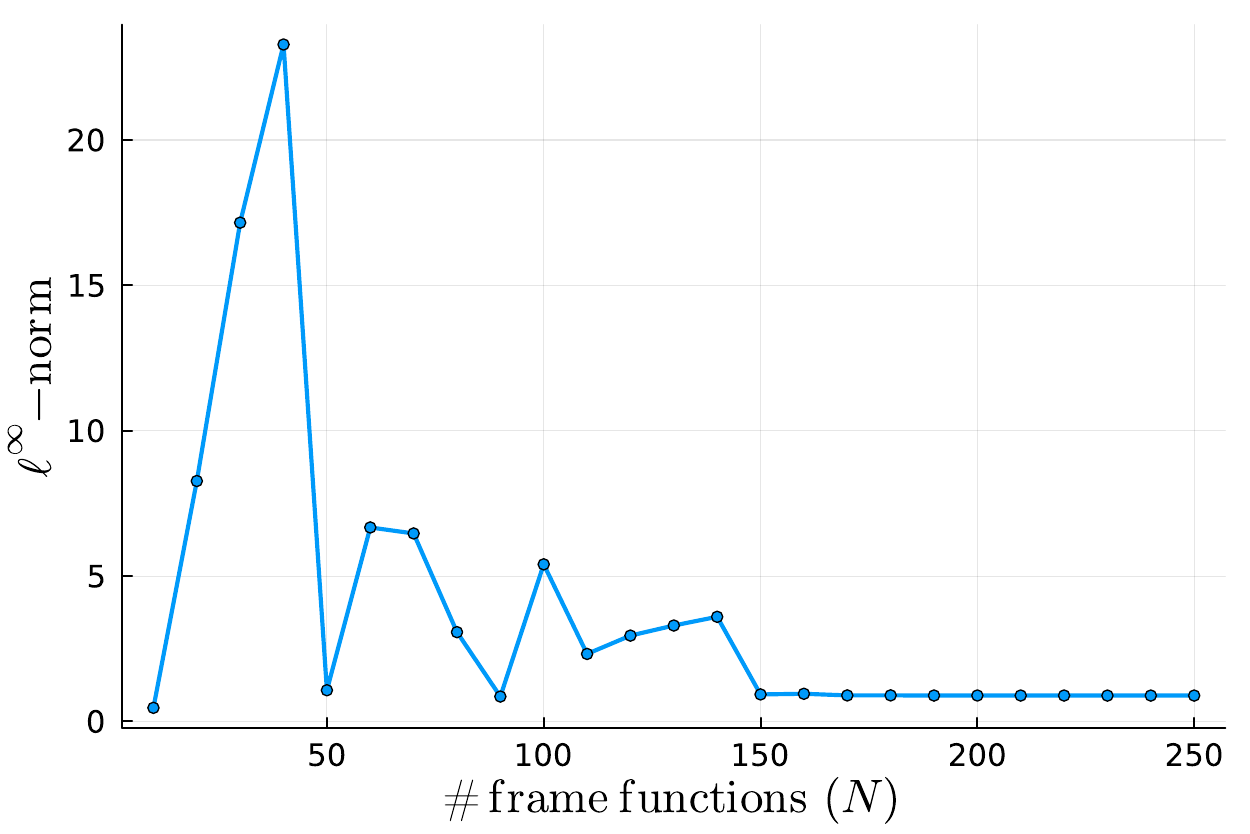} \label{fig:gaussian:coeffs}}
\caption{(Left) $\ell^\infty$-norm error in the right-hand side and solution of the Jacobi approximation for \cref{eq:ex2-1} with $s=1/3$. We observe approximate spectral convergence with the error stagnating when it reaches an order of $10^{-14}$ in magnitude for the right-hand side and $10^{-11}$ for the solution. (Right) $\ell^\infty$-norm of the coefficient vector of the expansion. We observe that coefficient norm is small.}\label{fig:gaussian}
\end{figure}

\subsection{Multiple exponents}
\label{sec:multiple-exponents}

In this section we consider an equation with fractional Laplacians with different exponents. Consider the equation:
\begin{align}
(\mathcal{I} + (-\Delta)^{1/3} + (-\Delta)^{1/5}) u(x) = \E^{-x^2} + (-\Delta)^{1/3}\E^{-x^2} + (-\Delta)^{1/5}\E^{-x^2}.
\label{eq:mult-exps}
\end{align}
As in the previous example, the exact solution to this equation is $u(x) =  \E^{-x^2}$. The sum space we consider is $\bigcup_{k=1}^5 \{ \eP_j^{I_k,(-1/4,-1/4)} \}_{j=0}^\infty \cup \{ Q_j^{I_k, (1/4,1/4)} \}_{j=0}^\infty$ where the intervals $I_1,\dots,I_5$ and the choice of collocation points are the same as in \cref{sec:gaussian}.  One could instead consider a sum space consisting of extended Jacobi functions with both the weights $(1/3,1/3)$ and $(1/5,1/5)$, and correspondingly their negative counterparts for the weighted Jacobi polynomials, in order to match the fractional exponents. However, our goal is to exemplify that the method is robust even if the weights do not match the fractional exponents. The weight chosen in this section's sum space is chosen as an intermediate between $1/3$ and $1/5$. 

A semi-log plot of the convergence for the right-hand side and the solution is given in \cref{fig:mult-exp:convergence}. The error is measured in the $\ell^\infty$-norm as measured at the collocation points. We observe approximately spectral convergence until $N=150$ where the convergence rate stagnates. The best right-hand side error is on the order of $10^{-13}$ in magnitude and the best solution error is on the order of $10^{-10}$. We observe that as $N$ increases the error becomes worse in the solution. As in the previous example we also plot the $\ell^\infty$-norm of the coefficient vector in \cref{fig:mult-exp:coeffs} and observe that the magnitude is small, reaching an order of one when $N>50$. We expect that fine-tuning the weight parameters in the Jacobi polynomials would allow one to achieve faster convergence.

\begin{figure}[h!]
\centering
\subfloat[$\ell^\infty$-norm error.]{\includegraphics[width =0.42 \textwidth]{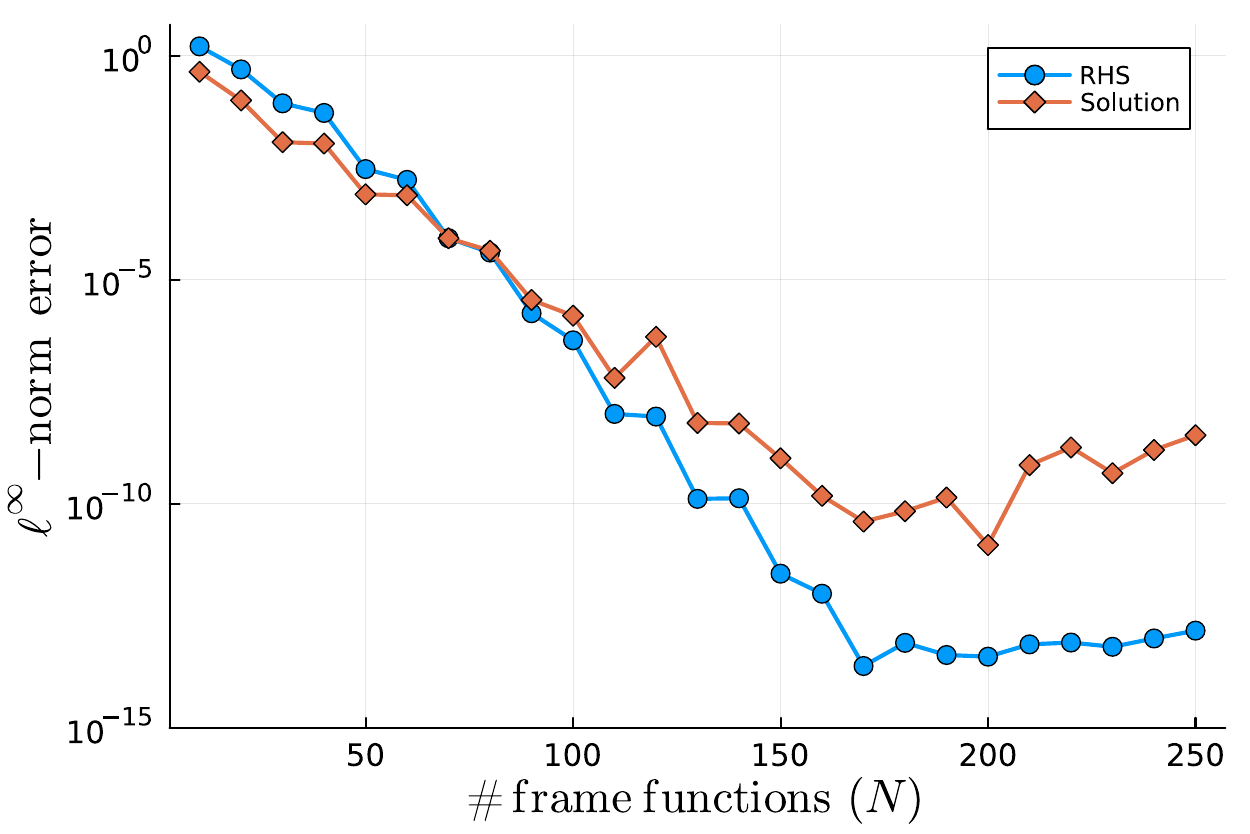} \label{fig:mult-exp:convergence}}
\subfloat[$\ell^\infty$-norm of the coefficient vector.]{\includegraphics[width =0.42 \textwidth]{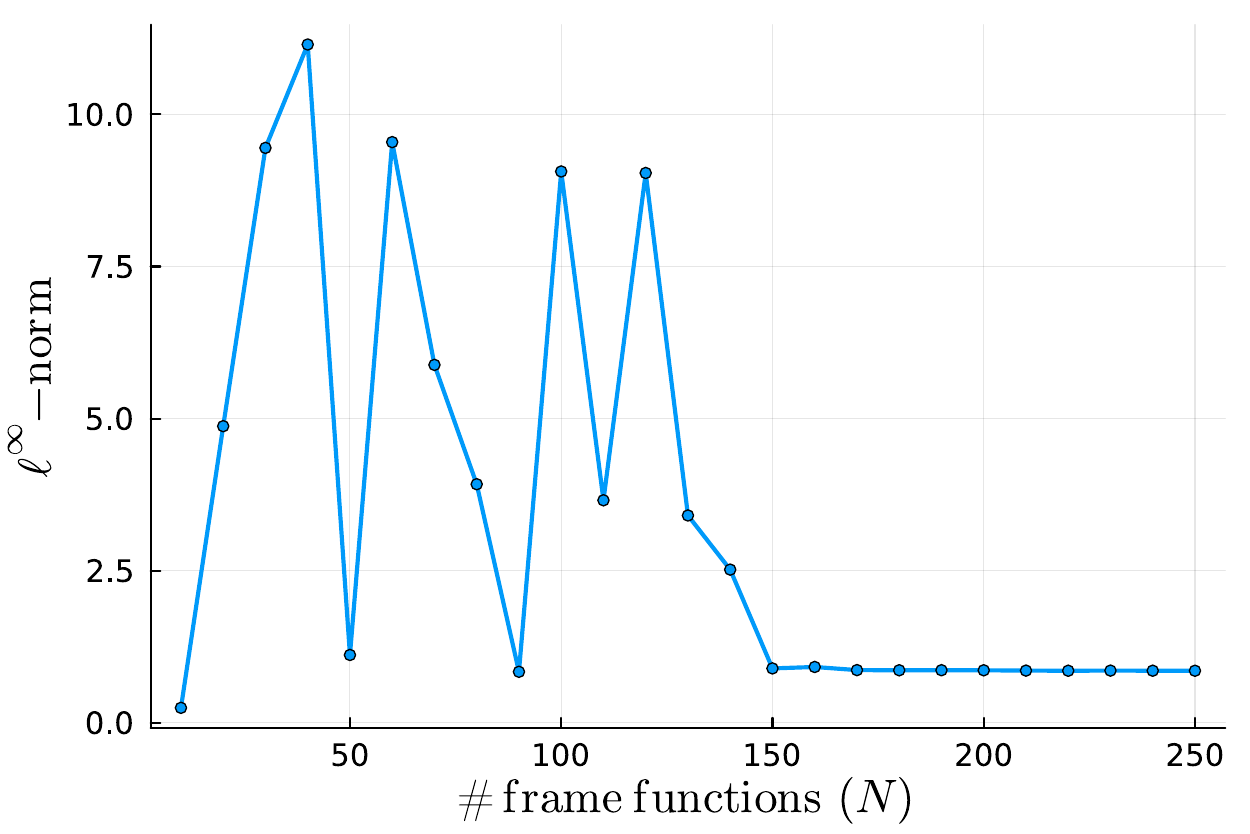} \label{fig:mult-exp:coeffs}}
\caption{(Left) Pointwise error in the right-hand side and solution of the Jacobi approximation for \cref{eq:mult-exps}. We observe approximate spectral convergence with the error stagnating when it reaches an order of $10^{-13}$ in magnitude for the right-hand side and $10^{-10}$ for the solution. (Right) $\ell^\infty$-norm of the coefficient vector of the expansion. We observe that coefficient norm is small.}\label{fig:mult-exp}
\end{figure}

\subsection{Piecewise smooth right-hand side}
\label{sec:piecewise-smooth}

Here we explore the convergence of our solver when the right-hand side is no longer smooth. We consider the problem
\begin{align}
(\mathcal{I} + (-\Delta)^{1/5}) u(x) = 
\begin{cases}
\frac{1}{1+x^2} & \text{if} \; x< 0,\\
\E^{-x} & \text{if} \; x \geq 0.
\end{cases}
\label{eq:piecewise-smooth}
\end{align}
The right-hand side is smooth wherever $x \neq 0$ and continuous, but not continuously differentiable, at $x=0$. 

We pick $\bigcup_{k=1}^5 \{ \eP_n^{I_k,(-1/5,-1/5)} \}_{n=0}^\infty \cup \{ Q_n^{I_k, (1/5,1/5)} \}_{n=0}^\infty$, as the solution family of functions, where $I_1,\dots,I_8$ are $[-16,-12]$, $[-12,-8]$, $[-8,-4]$, $[-4,0]$, $[0,4]$, $[4,8]$, $[8,12]$, and $[12,16]$, respectively. Akin to before we choose $N$ equally spaced points in $[a+\epsilon, b-\epsilon]$, $\epsilon = 10^{-2}$, where $a, b$ represent the endpoints of each interval $I_k$ as well as $N$ equally spaced points in $[-25+\epsilon,-16-\epsilon]$ and $[16+\epsilon,25-\epsilon]$. This results in $10N$ collocation points.

A semi-log plot of the convergence for the right-hand side and the solution is given in \cref{fig:piecewise-smooth:convergence} where, as the exact solution is not known, we measure the Cauchy error between each successive approximate solution. The Cauchy error is measured in the $\ell^\infty$-norm at the collocation points of the finer solution. We observe approximately spectral convergence in the approximation of the right-hand side as measured via the Cauchy error and against the exact function whereas the Cauchy error of the solution follows an algebraic decay. The right-hand side error stagnates when it reaches an order of $10^{-9}$ in magnitude and the solution error at $10^{-7}$.

In \cref{fig:piecewise-smooth:coeffs} we investigate the behaviour of the expansion coefficients of the right-hand side which reaches its peak in magnitude at $N=180$ and then decreases.
\begin{figure}[h!]
\centering
\subfloat[(Cauchy) $\ell^\infty$-norm error.]{\includegraphics[width =0.43 \textwidth]{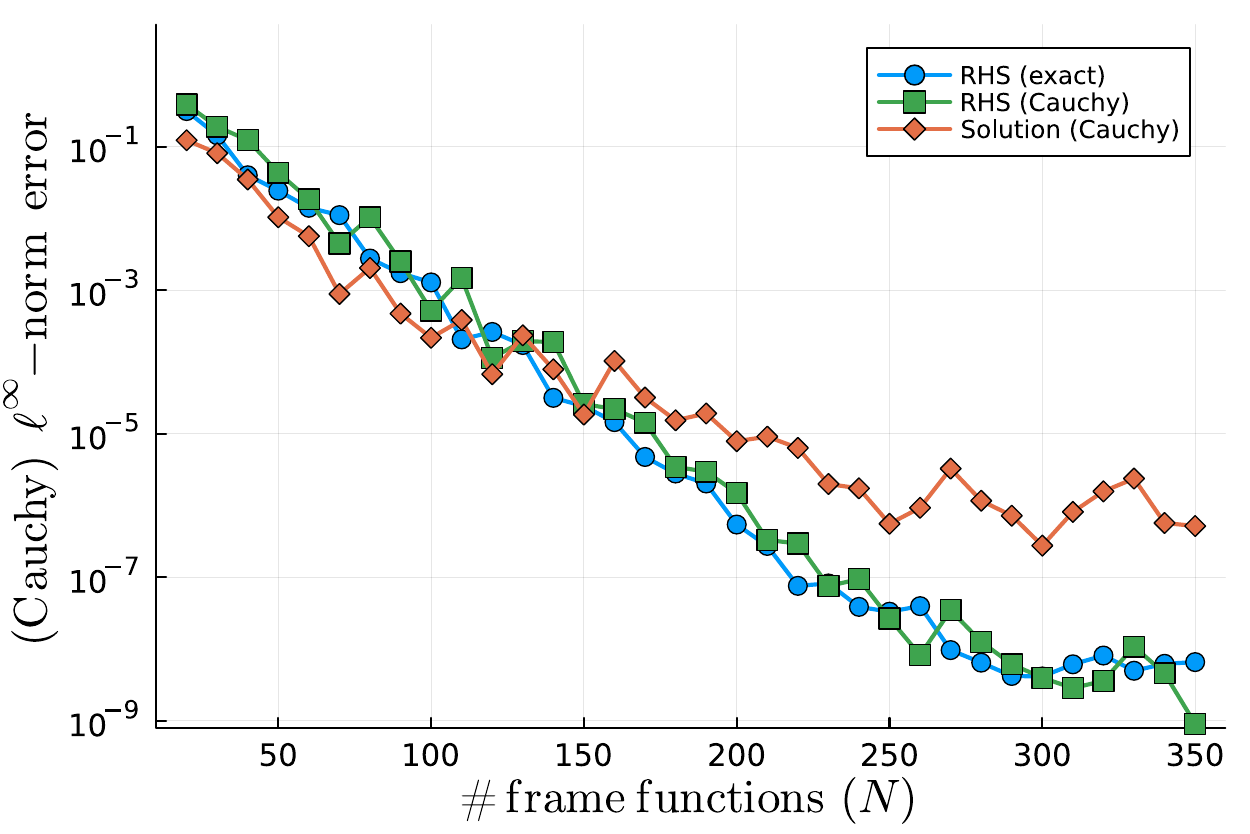} \label{fig:piecewise-smooth:convergence}}
\subfloat[$\ell^\infty$-norm of ${\mathbf u}$.]{\includegraphics[width =0.43 \textwidth]{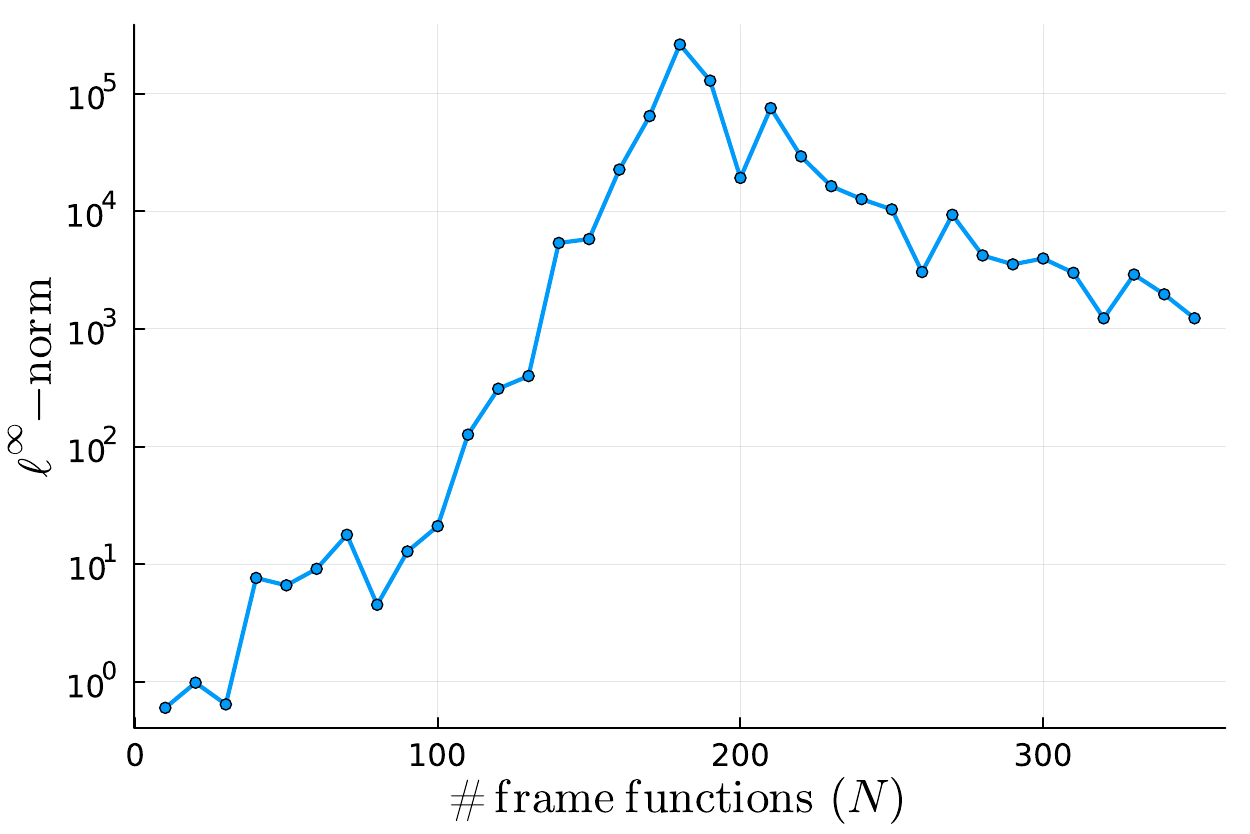} \label{fig:piecewise-smooth:coeffs}}
\caption{(Left) $\ell^\infty$-norm error in the right-hand side as measured against the exact solution at the collocation points as well as the $\ell^\infty$-norm error in the right-hand side and solution against the previous expansion (Cauchy error) for \cref{eq:piecewise-smooth}. We observe convergence with the error stagnating when it reaches an order of $10^{-9}$ in magnitude for the right-hand side and $10^{-7}$ for the solution. (Right) $\ell^\infty$-norm of the coefficient vector of the expansion.}\label{fig:piecewise-smooth}
\end{figure}

\subsection{Two-dimensional Gaussian}
\label{sec:2d-gaussian}

In this example we exemplify that the spectral method extends to two-dimensional problems. Consider the equation
\begin{align}
(-\Delta)^{1/2} u(x,y) = 2 \Gamma(3/2) {_1}F_1(3/2;1;-x^2-y^2), \label{eq:2dgaussian}
\end{align}
which has the exact solution $u(x,y) = \exp(-x^2-y^2)$ \cite[Prop.~4.2]{Sheng2020}. We utilize a solution family of functions consisting of weighted and extended Zernike polynomials and functions (as introduced in \cref{sec:zernike}), respectively, that are scaled radially on a sequence of nested disks. Let $W^{(b)}_{n,m,j}(x,y) \coloneqq (1-r^2)^b Z^{(b)}_{n,m,j}(x,y)$. Since the right-hand side and the solution are rotationally invariant, both functions are best approximated by the family of functions restricted to the $(m,j) = (0,1)$ Fourier mode and sign. Thus we use the sum space
\begin{align}
\bigcup_{k=1}^K \{ W^{(1/2)}_{n,0,1}(a_k x, a_k y) \} \cup  \{ \tilde{Z}^{(-1/2,-1/2)}_{n,0,1}(a_k x, a_k y) \}, \;\; a_k  \in \{1, 3/2, 2, 3, 4\}. 
\end{align}

The supports of the weighted Zernike polynomials are overlapping unlike for the weighted Jacobi polynomials in the one-dimensional case. In order to leverage non-overlapping domains, one would require a domain decomposition consisting of an innermost disk and concentric annuli together with corresponding families of functions defined on such domains. For instance one may use Zernike polynomials for the innermost disk followed by Zernike annular polynomials for the annuli \cite{papadopoulos2024}. However, we currently do not have the explicit formulae for the fractional Laplacian applied to Zernike annular polynomials. Although these could be evaluated numerically, this may cause a computational bottleneck.

As the sum space contains functions that are undefined for $r \in \{0, 1, 3/2, 2, 3, 4\}$, we choose $N$ equally spaced points in the radial direction $[a+\epsilon, b-\epsilon]$, $\epsilon = 10^{-3}$, where $a, b$ represent the endpoints of each interval as well as $N$ equally spaced points in $[4+\epsilon,10-\epsilon]$. We then cross product these radial collocation points with 5 equally spaced angular collocation points between 0 to $2\pi$. This results in $30N$  collocation points. A semi-log plot of the convergence for the solution is given in \cref{fig:2d-gaussian:convergence}. The error is measured in the $\ell^\infty$-norm as measured at the collocation points. The best solution error is on the order of $10^{-9}$ in magnitude. We plot the $\ell^\infty$-norm of the coefficient vector in \cref{fig:2d-gaussian:coeffs} and observe an order of one in magnitude when $N \leq 100$. 

\begin{figure}[h!]
\centering
\subfloat[$\ell^\infty$-norm error.]{\includegraphics[width =0.4 \textwidth]{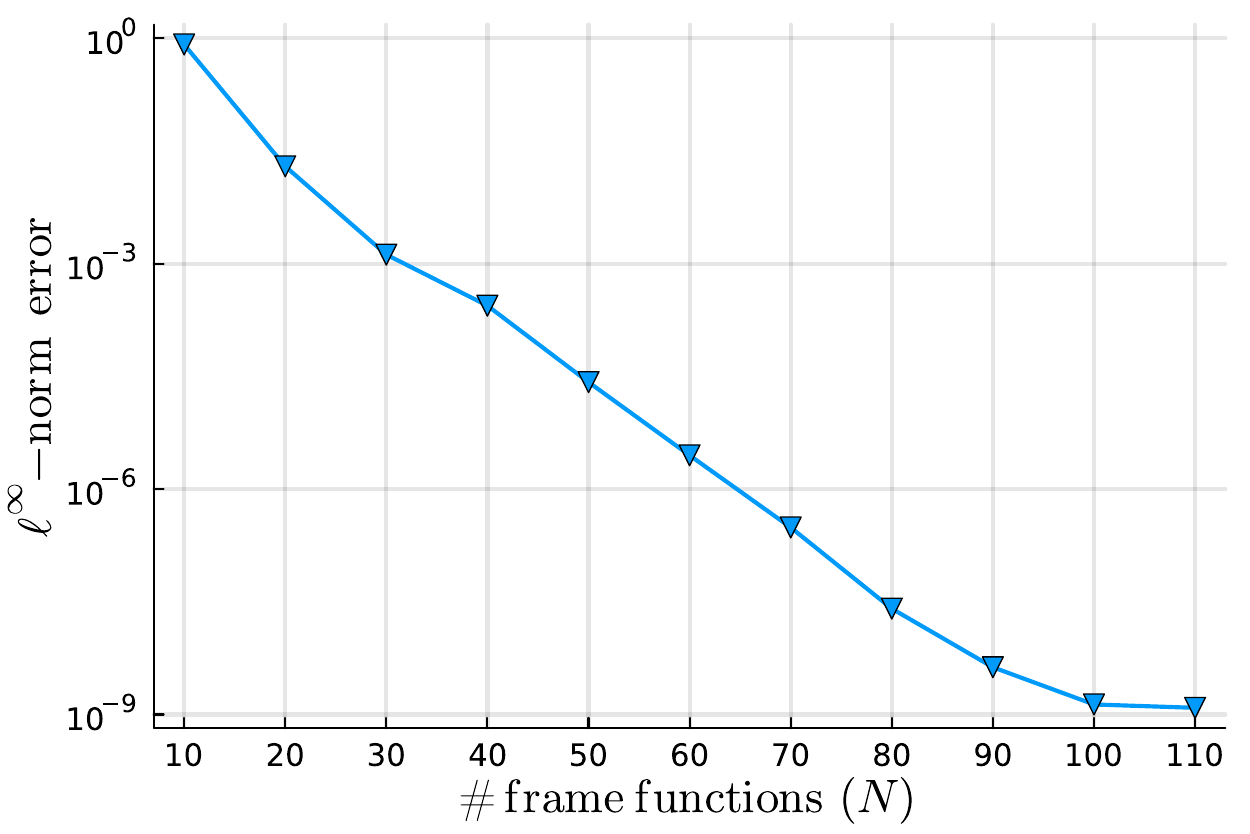} \label{fig:2d-gaussian:convergence}}
\subfloat[$\ell^\infty$-norm of the coefficient vector.]{\includegraphics[width =0.4 \textwidth]{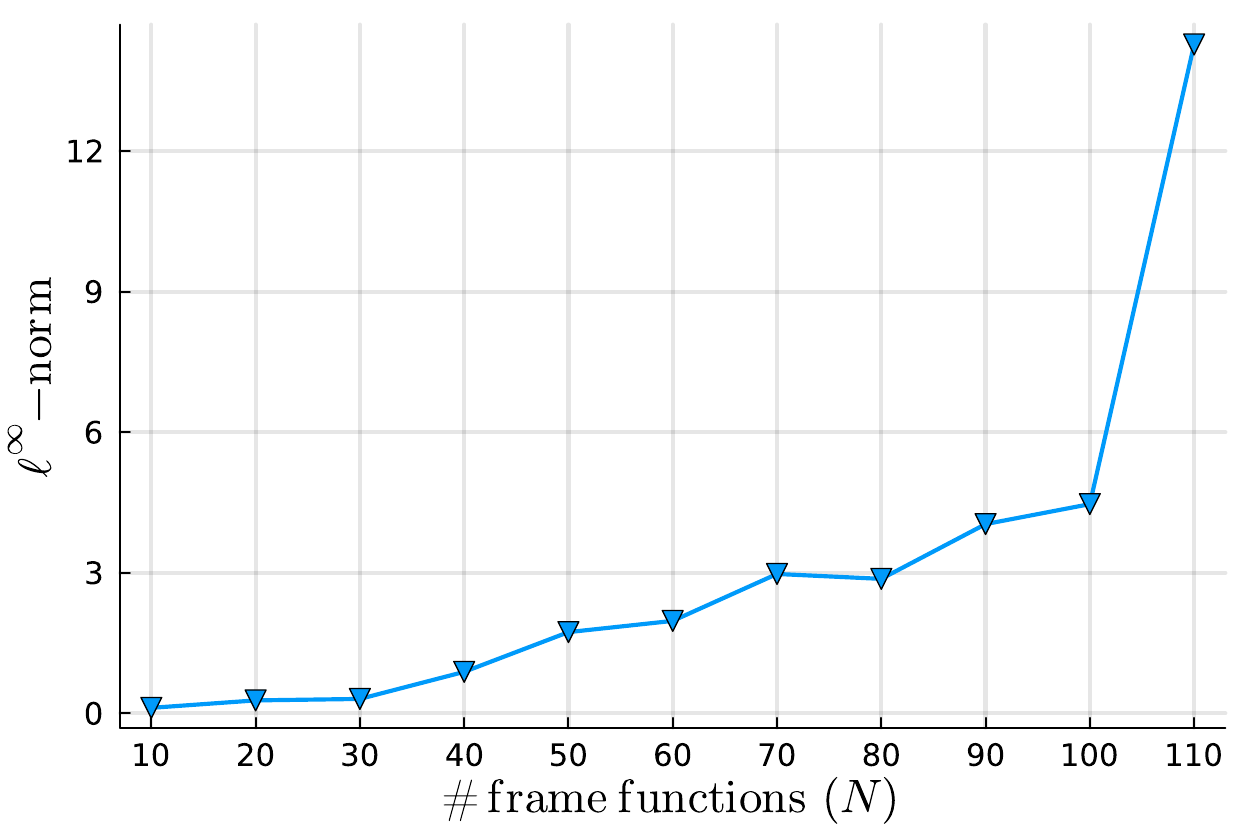} \label{fig:2d-gaussian:coeffs}}
\caption{(Left) Pointwise error in the solution of the Zernike approximation for \cref{eq:2dgaussian}. We observe approximate spectral convergence with the error stagnating when it reaches an order of $10^{-9}$ in magnitude. (Right) $\ell^\infty$-norm of the coefficient vector of the expansion. We observe that coefficient norm is small.}\label{fig:2d-gaussian}
\end{figure}
\subsection{Fractional heat equation}
\label{sec:fractional-heat}
We consider the time-dependent fractional heat equation \cref{eq:heat} with the initial state $u(x,0) = (1+x^2)^{-1}$ and $s=1/2$. The explicit solution is $u(x,t) = (1 + t) / (x^2 + (1+t)^2)$. We consider the time discretizations backward Euler (1$^{\text{st}}$ order), implicit midpoint rule (2$^{\text{nd}}$ order), Gauss--Legendre (4$^{\text{th}}$ order), and Gauss--Legendre (6$^{\text{th}}$ order) via \cref{alg:rk}.% All four Runge--Kutta methods are $A$-stable and suitable for stiff problems. 

Our goal is to examine the convergence of the temporal discretization. We utilize the solution family of functions  $\bigcup_{k=1}^5 \{ \eP_j^{I_k,(-1/2,-1/2)} \}_{j=1}^\infty \cup \{ Q_j^{I_k,(1/2,1/2)} \}_{j=0}^\infty$ where the intervals $I_1,\dots,I_5$ are $[-5,-3]$, $[-3,-1]$, $[-1,1]$, $[1,3]$, and $[3,5]$, respectively. We choose 2000 equally spaced points in $[a+\epsilon, b-\epsilon]$, $\epsilon = 10^{-4}$, where $a, b$ represent the endpoints of each interval as well as 2000 equally spaced points in $[-20+\epsilon,-5-\epsilon]$ and $[5+\epsilon,20-\epsilon]$. This results in 14,000 collocation points. We include 250 functions in the family of functions.  Expanding the initial state results in an $\ell^\infty$-norm error of $3.16 \times 10^{-12}$ as measured at the collocation points. Next we measure the relative maximum $\ell^\infty$-norm as measured at all the collocation points and time steps from $t=0$ to $t=1$ for time step choices $\delta t \in  \{10^{-1}, 10^{-2}, 10^{-3},  10^{-4}\}$, i.e.
\begin{align}
\max_{k \in \{0, 1, 2, \dots, \delta t^{-1} \}} \max_{x \in X_c} \frac{| u(x,k \delta t) - {\bm \Phi}(x) \vectt{v}_{k} |}{|u(x,k \delta t)|},
\label{eq:time-error}
\end{align}
where $X_c$ is the set of the collocation points. The convergence results are given in the left panel in \cref{fig:fractional-heat-convergence}. We observe the expected order of convergence for the methods, with deviations occurring when the error is close to a magnitude of the order of $10^{-10}$ where the error in the spatial discretization likely causes the degradation in the convergence. This example is numerical evidence for the result provided in \cref{th:implicit-euler}. Moreover, it appears that the result extends to all implicit Runge--Kutta methods and their classical associated convergence rates. In the right panel of \cref{fig:fractional-heat-convergence} we plot the pointwise error of the solution, computed via the 6$^{\text{th}}$ order Gauss--Legendre method with $\delta t = 10^{-2}$, at $t=1$ in the domain $x \in [-10^3,10^3]$. We observe that the algebraic tails of the solution are accurately captured by this spectral method.
 
\begin{figure}[h!]
\centering
\includegraphics[width =0.57 \textwidth]{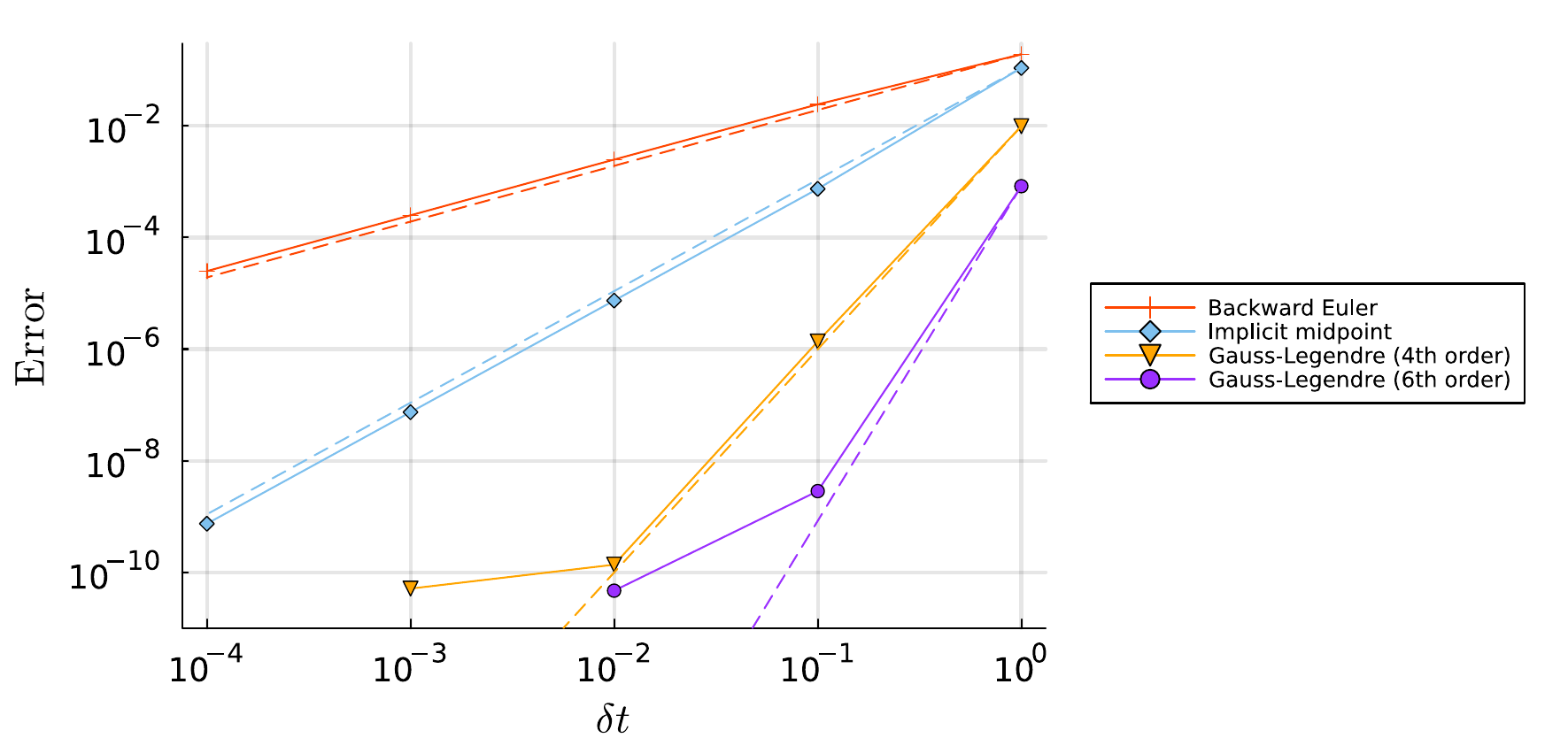}
\includegraphics[width =0.42 \textwidth]{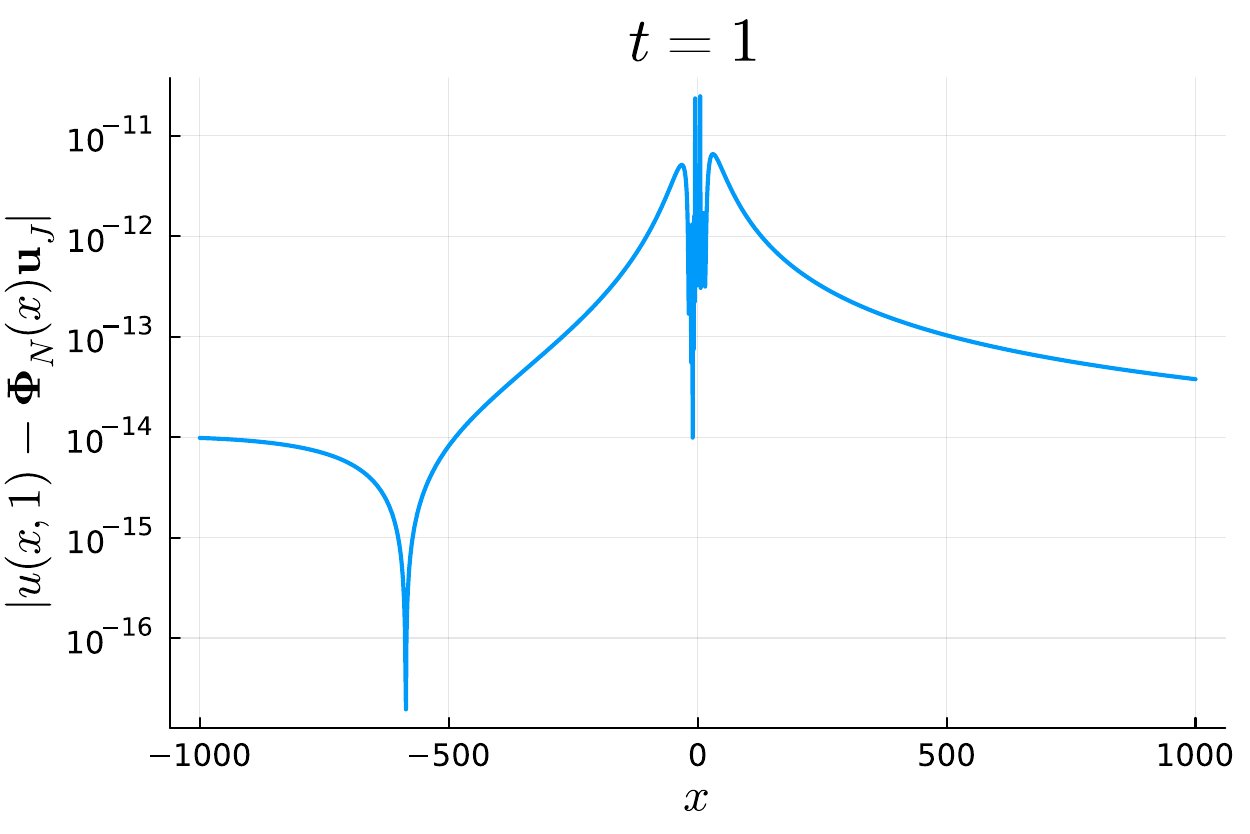} 
\caption{(Left) Convergence of the four Runge--Kutta methods for decreasing $\delta t$ using \cref{eq:time-error} to measure the error. The red, blue, orange, and magenta dashed lines indicate $\mathcal{O}(\delta t)$, $\mathcal{O}(\delta t^2)$, $\mathcal{O}(\delta t^4)$, and $\mathcal{O}(\delta t^6)$ convergence rates, respectively. We observe the expected convergence rates of all four methods. We note that the stagnation of the convergence rate when close to an order of magnitude of $10^{-10}$ is likely caused by the error in the spatial discretization. (Right) Pointwise error of the approximated solution, computed via the $6^{\text{th}}$-order Gauss--Legendre method with $\delta t = 10^{-2}$, at $t=1$ in the domain $x \in [-10^3,10^3]$. We see that the algebraic tails are suitably captured.}
\label{fig:fractional-heat-convergence}
\end{figure}
\subsection{Variable fractional exponent}
\label{sec:variable-s}

Consider the heat equation \cref{eq:heat}, except now the exponent of the fractional Laplacian also evolves in time \cite{Patnaik2020}, i.e.
\begin{align}
\partial_t u(x,t) + (-\Delta)^{s(t)} u(x,t) = 0, \;\; u(x,0) = (1+x^2)^{-1}.
\label{eq:variable-s}
\end{align}
In this example we pick $s(t) = s^*(t) \coloneqq 1/2 - t/3$. We choose the same intervals and collocation points as in \cref{sec:gaussian} with $N= 250$ resulting in 1750 collocation points. Utilizing an implicit midpoint rule for the temporal discretization with $\delta t = 10^{-2}$, we consider the family of functions $\bigcup_{k=1}^5 \{ \eP^{I_k, (-s(t_n), -s(t_n))}_n \}_{n=1}^\infty \cup \{ Q^{I_k, (s(t_n), s(t_n))}_n \}_{n=0}^\infty$, $t_j = j \delta t$, $j = 0,1,\dots,100$, at each time step. In \cref{fig:variable-s} we plot the approximation of the solution of \cref{eq:variable-s} at $t=1$ together with the equivalent solutions if $s(t) = 1/2$, $1/3$, and $1/6$ for all $t$. We see that the decay of the variable exponent solution is similar to the decay where $s = 1/3$ which is equal to $s^*(1/2)$.

\begin{figure}[h!]
\centering
\includegraphics[width =0.4 \textwidth]{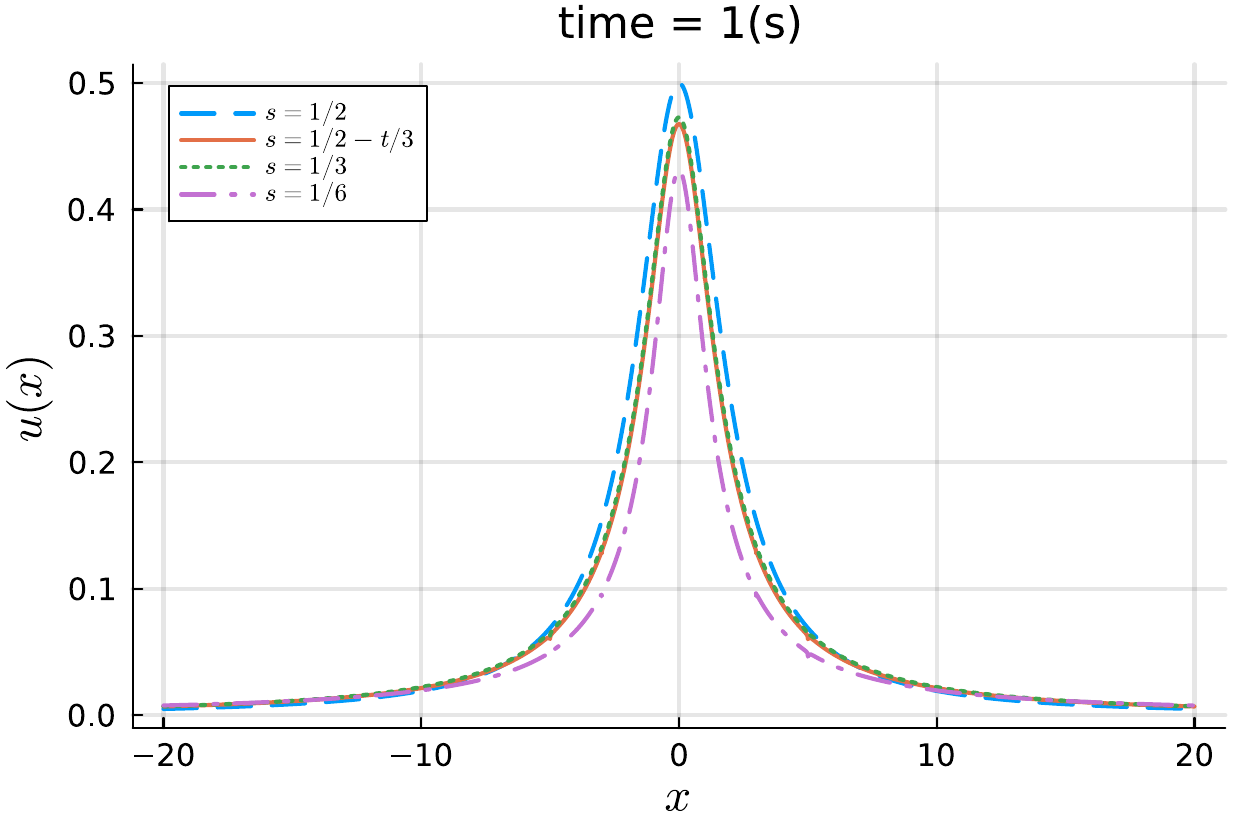}
\caption{The approximation solutions of \cref{eq:variable-s} where $s(t) = 1/2$, $1/2 - t/3$, $1/3$, and $1/6$ at $t=1$.}
\label{fig:variable-s}
\end{figure}

\section{Conclusions}
\label{sec:conclusions}
In this work, we introduced a frame approach, summarized in \cref{alg:frame}, for solving equations involving nonlocal and fractional terms such as the fractional Laplacian $(-\Delta)^s$, $s \in  (  0,1)$.  One fixes a family of functions for the expansion of the solution and expands the right-hand side of the equation in the space induced by applying the equation operator to this family of functions. This  diagonalizes the equation and  reduces the problem from a solve of a nonlocal problem to an interpolation problem.

Under suitable conditions, we showed that if the family of functions for the solution expansion is a frame in a suitable Hilbert space, then the induced family of functions for the right-hand side expansion is a frame in the dual of the Hilbert space. By considering solution expansions in weighted Jacobi and Zernike polynomials, we utilized recent results to deduce the family of functions for the right-hand side expansion. With this knowledge, one automatically recovers favourable properties of frames, in particular, the utilization of a truncated SVD projection to find a well-posed expansion. This culminated in deriving an a priori estimate in \cref{th:apriori}.

This spatial discretization was coupled with Runga--Kutta methods in time in order to develop a method for time-dependent problems including the fractional heat equation as well as a problem where the exponent of the fractional Laplacian evolved in time. We proved that one recovers the optimal convergence rates under an implicit Euler discretization and observed the expected convergence rates for our chosen Runge-Kutta methods in practice, up to a $6^{\text{th}}$ order method. Moreover, the method was also competitive when applied to a two-dimensional problem.

This approach is general and is not limited to the operators considered in this work. Provided one has explicit expressions of the equation operator applied to a family of functions, one may use this frame approach for a considerable number of different problems. Future work will consider generalizing this technique to be competitive in higher dimensions. The bottleneck inevitably becomes the SVD factorization to interpolate the right-hand side. However, by using randomized SVD solvers \cite{Halko2011, Meier2023} and the $AZ$-algorithm \cite{Coppe2020}, we hope to considerably reduce the computational complexity. 

\section*{Funding}  Our research was supported by the EPSRC grant EP/T022132/1 ``Spectral element methods for fractional differential equations, with applications in applied analysis and medical imaging". IPAP and SO were also supported by the Leverhulme Trust Research Project Grant RPG-2019-144 ``Constructive approximation theory on and inside algebraic curves and surfaces". IPAP was further supported by the Deutsche Forschungsgemeinschaft (DFG, German Research Foundation) under Germany's Excellence Strategy -- The Berlin Mathematics Research Center MATH+ (EXC-2046/1, project ID: 390685689). TSG was also supported by a PIMS-Simons postdoctoral fellowship, jointly funded by the Pacific Institute for the Mathematical Sciences and the Simons Foundation. JAC was supported by the Advanced Grant Nonlocal-CPD (Nonlocal PDEs for Complex Particle Dynamics: Phase Transitions, Patterns and Synchronization) of the European Research Council Executive Agency (ERC) under the European Union’s Horizon 2020 research and innovation programme (grant agreement No.~883363).

\printbibliography
\end{document}